\documentclass[12pt, english]{article}

\usepackage{babel}
\usepackage{amsmath}
\usepackage{amssymb}
\usepackage{amsthm}
\usepackage{tikz}
\usepackage{pgfplots}
\usepackage{xcolor}
\usepackage{url}

\theoremstyle{plain}
\newtheorem{Thm}{Theorem}
\newtheorem{Prop}[Thm]{Proposition}

\newtheorem{Lemma}[Thm]{Lemma}
\newtheorem{Conjecture}[Thm]{Conjecture}

\theoremstyle{definition}

\newtheorem{Def*}{Definition}

\begin{document}

\title{Moment generating functions in combinatorial optimization: Bipartite matching}
\author{Johan Wästlund}

\date{}

\maketitle

\begin{abstract}
In a random model of minimum cost bipartite matching based on exponentially distributed edge costs, we show that the distribution of the cost of the optimal solution can be computed efficiently. The distribution is represented by its moment generating function, which in this model is always a rational function. The complex zeros of this function are of interest as the lack of zeros near the origin indicates a certain regularity of the distribution. We propose a conjecture according to which these moment generating functions never have complex zeros of smaller modulus than their first pole. For minimum cost perfect matching, also known as the assignment problem, such a zero-free disk would imply a Gaussian scaling limit. 
\end{abstract}

\section{Introduction} \label{S:intro}
Optimization problems on graphs with random edge costs have been studied from various perspectives in mathematics, physics, computer science and machine learning. In addition to their interest as ``large'' random processes, they have been used for testing and evaluating algorithms and as toy models of networks and physical systems. 

In a graph with nonnegative costs assigned to the edges, we typically want to minimize the total cost of a configuration of edges meeting certain connectivity criteria. Some archetypal problems of this kind are minimum matching, the traveling salesman problem, minimum spanning tree, minimum cover, and shortest path. We intend to study all of these in following papers, but here we focus on bipartite matching.

\subsection{Minimum cost matching}

In the basic form of our model, the $n^2$ edges of the complete bipartite graph $K_{n, n}$ are assigned independent costs from mean 1 exponential distribution. A \emph{perfect matching} is a set of $n$ edges connecting each vertex to exactly one vertex on the opposite side, thus constituting a pairing of the vertices. 
Under this constraint we want to minimize the sum of the costs of the $n$ edges, and we let $C_n$ be this minimum total cost. This optimization problem is also called the \emph{assignment problem}.
Figure~\ref{F:matchings} shows the feasible solutions in the case $n=3$. 

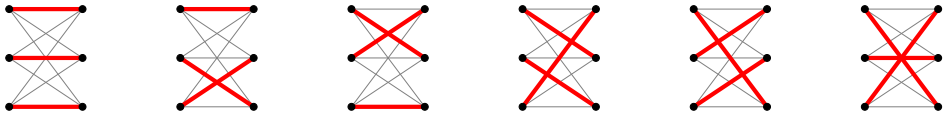
\begin{figure}[h]
\begin{center}
\begin{tikzpicture} [scale=0.65]

\foreach \x in {0,1,2}
  \foreach \y in {0,1,2}
    \draw[gray, ultra thin] (0,\x)--(1.5,\y);   
\draw[ultra thick, red]  (0,0)--(1.5,0);
\draw[ultra thick, red]  (0,1)--(1.5,1);
\draw[ultra thick, red]  (0,2)--(1.5,2);    
\foreach \x in {0,1.5}
  \foreach \y in {0,1,2}
    \filldraw (\x, \y) circle (2pt);
    
\begin{scope}[xshift=3.5cm] 
\foreach \x in {0,1,2}
  \foreach \y in {0,1,2}
    \draw[gray, ultra thin] (0,\x)--(1.5,\y);   
\draw[ultra thick, red]  (0,0)--(1.5,1);
\draw[ultra thick, red]  (0,1)--(1.5,0);
\draw[ultra thick, red]  (0,2)--(1.5,2);    
\foreach \x in {0,1.5}
  \foreach \y in {0,1,2}
    \filldraw (\x, \y) circle (2pt);
 \end{scope}
 
 \begin{scope}[xshift=7cm] 
\foreach \x in {0,1,2}
  \foreach \y in {0,1,2}
    \draw[gray, ultra thin] (0,\x)--(1.5,\y);   
\draw[ultra thick, red]  (0,0)--(1.5,0);
\draw[ultra thick, red]  (0,1)--(1.5,2);
\draw[ultra thick, red]  (0,2)--(1.5,1);    
\foreach \x in {0,1.5}
  \foreach \y in {0,1,2}
    \filldraw (\x, \y) circle (2pt);
 \end{scope}

\begin{scope}[xshift=10.5cm] 
\foreach \x in {0,1,2}
  \foreach \y in {0,1,2}
    \draw[gray, ultra thin] (0,\x)--(1.5,\y);   
\draw[ultra thick, red]  (0,0)--(1.5,2);
\draw[ultra thick, red]  (0,1)--(1.5,0);
\draw[ultra thick, red]  (0,2)--(1.5,1);    
\foreach \x in {0,1.5}
  \foreach \y in {0,1,2}
    \filldraw (\x, \y) circle (2pt);
 \end{scope}

\begin{scope}[xshift=14cm] 
\foreach \x in {0,1,2}
  \foreach \y in {0,1,2}
    \draw[gray, ultra thin] (0,\x)--(1.5,\y);   
\draw[ultra thick, red]  (0,0)--(1.5,1);
\draw[ultra thick, red]  (0,1)--(1.5,2);
\draw[ultra thick, red]  (0,2)--(1.5,0);    
\foreach \x in {0,1.5}
  \foreach \y in {0,1,2}
    \filldraw (\x, \y) circle (2pt);
 \end{scope}

\begin{scope}[xshift=17.5cm] 
\foreach \x in {0,1,2}
  \foreach \y in {0,1,2}
    \draw[gray, ultra thin] (0,\x)--(1.5,\y);   
\draw[ultra thick, red]  (0,0)--(1.5,2);
\draw[ultra thick, red]  (0,1)--(1.5,1);
\draw[ultra thick, red]  (0,2)--(1.5,0);    
\foreach \x in {0,1.5}
  \foreach \y in {0,1,2}
    \filldraw (\x, \y) circle (2pt);
 \end{scope}
\end{tikzpicture}
\end{center}
\caption{The $6=3!$ perfect matchings on the complete bipartite graph $K_{3,3}$. The $9=3^2$ edges are assigned independent mean 1 exponential costs, and the minimum total cost $C_3$ of a perfect matching has expectation $1+1/4+1/9=49/36$ according to equation \eqref{parisi}. We show in the following how to characterize the distribution of $C_n$ completely.} 
\label{F:matchings}

\end{figure}

As $n\to\infty$, the random variable $C_n$ converges in mean and in probability to the constant $\pi^2/6$.
\begin{equation} \label{zeta2} 
C_n \overset{\rm p}\to \zeta(2) = \frac{\pi^2}{6} \approx 1.6449.
\end{equation}
This limit was derived non-rigorously by Marc Mézard and Giorgio Parisi in the 1980's \cite{MP85, MP87, P86}, and proved by David Aldous in 2001 \cite{A92, A01}. Other proofs have since been obtained, for instance in \cite{W08easy}.

The fluctuations of $C_n$ are of order $1/\sqrt{n}$. A bound exceeding the true order only by logarithmic factors was given by Michel Talagrand \cite{T95} in his work on isoperimetric inequalities. In \cite{W10} we showed that
\begin{equation} \label{varLimit} 
\operatorname{var}(C_n) \sim \frac{4\zeta(2)-4\zeta(3)}{n}.
\end{equation}
The constant $4\zeta(2) - 4\zeta(3) \approx 1.7715$ was also derived for a related model by Enrico M.~Malatesta, Giorgio Parisi and Gabriele Sicuro \cite{MPS19}.

\subsection{Finite $n$ results for exponential edge costs} \label{S:exact}
The limits \eqref{zeta2} and \eqref{varLimit} do not depend crucially on the edge costs having exponential distribution. They would remain the same with another nonnegative distribution with density~1 at zero, although the modes of convergence may become weaker. This observation as well as the large $n$ scaling of $C_n$ are explained by the fact that most of the time only extremely cheap edges participate in the minimum cost perfect matching. 

In several papers, the edge costs have been assumed to be uniform on the interval $[0,1]$ with the understanding that the specific distribution is not important.
But in the short note \cite{P98}, Parisi suggested that the expected cost of bipartite perfect matching on $K_{n,n}$ is
\begin{equation} \label{parisi} \mathbb{E}(C_n) = \sum_{i=1}^n\frac{1}{i^2} = 1 + \frac14 + \frac19 + \dots + \frac{1}{n^2},\end{equation}
provided the edge costs are taken from mean 1 exponential distribution. This equation, obviously consistent with the $\zeta(2)$ limit \eqref{zeta2}, was proved simultaneously and independently by Chandra Nair, Balaji Prabhakar and Mayank Sharma \cite{NPS05} and Svante Linusson and myself \cite{LW04}. 

It seems that before Parisi's note, nobody expected an optimization problem as complicated as minimum matching to have such a simple answer in a probabilistic setting. But the formula \eqref{parisi} opened the possibility of establishing results like the $\zeta(2)$ limit (which at the time was not yet proved) through inductive combinatorial arguments based on exact identities for finite $n$. 
 This approach was taken in \cite{AS02, BCR02, CS99, HW10, LW04, NPS05, W08complete, W10}, and here
we continue the exploration of finite $n$ identities by showing that the distribution of $C_n$ (not only its expectation) can be characterized with a relatively simple computation. Figure~\ref{F:densities}  shows the density functions of $C_n$ for $n=1,\dots, 25$. We explain in Section~\ref{S:bipartiteMatching} how these functions were computed.  

\begin{figure}
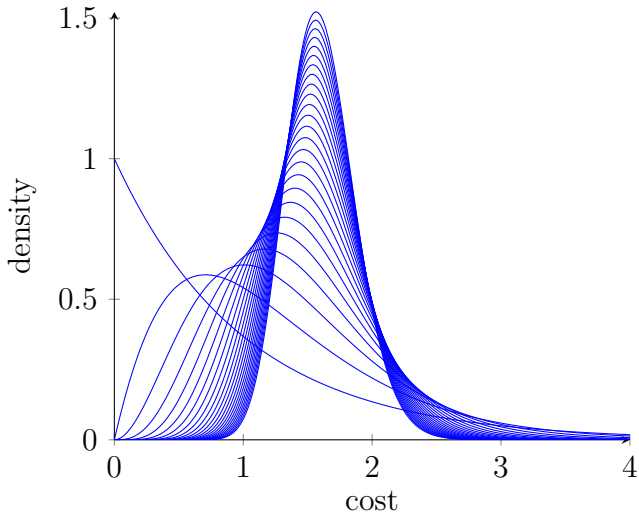

\begin{center}

\caption{The density functions $f_n$ of $C_n$ for $n=1,\dots, 25$. We recognize $f_1$ as the one with $f_1(0)=1$, and $f_2$ by $f'_2(0)>0$. Then $f_3,\dots, f_{25}$ follow in order of increasing density around $\pi^2/6\approx 1.64$.
}
\label{F:densities}
\end{center}
\end{figure}

\subsection{Conjectured Gaussian limit} \label{S:prelGaussian}
We make some general remarks about the conjectured large $n$ behavior of $C_n$. In Section~\ref{S:cumulants} we continue the discussion in the light of our current results.

In the $n$ by $n$ complete bipartite graph $K_{n,n}$ each vertex has $n$ edges, of which exactly one will participate in the minimum cost perfect matching. We expect that most often the participating edge will be one of the cheaper. The cost of the very cheapest of the $n$ edges at a given vertex is exponential of mean $1/n$, and the $\zeta(2)$ limit shows that the average cost of an edge in the minimum perfect matching is only about $1.64$ times this minimum (what Talagrand called the ``cost of monogamy'' in \cite{T00}). Moreover, although not easy to justify even heuristically, it seems that the costs of matching vertices that are distant in the geometry induced by the edge costs are close to independent. We therefore expect $C_n$ to behave qualitatively like a sum of $n$ independent random variables with mean of order $1/n$. 

A natural question is whether, after proper rescaling, $C_n$ obeys a central limit theorem in the sense of converging to a Gaussian distribution. This was at least implicitly conjectured already in \cite{MP85, MP86b, MP87}, and Figure~\ref{F:densities} is visually consistent with a Gaussian limit. 
In view of \eqref{zeta2} and \eqref{varLimit}, we conjecture more precisely that
\begin{equation} \label{rescaled}
\tilde{C}_n = \sqrt{n}\cdot (C_n - \pi^2/6) \overset{\mathcal{D}}\longrightarrow \mathcal{N}(0,4\zeta(2)-4\zeta(3)),
\end{equation}
where $\tilde{C}_n$ is the rescaled cost. Notice that the difference between \eqref{parisi} and its limit $\pi^2/6$ is of order $1/n$ which is smaller than the fluctuations of $C_n$. Therefore the conjecture will be the same whether we put $\mathbb{E}(C_n)$ or its limit $\pi^2/6$ in \eqref{rescaled}.

To the best of my knowledge no proof of \eqref{rescaled} is known, but in \cite{Cao19} Sky Cao has shown, using Stein's method \cite{Chatterjee14, Chatterjee17}, that the cost of a certain ``diluted'' matching problem is asymptotically Gaussian. 

\subsection{Cumulants}
Denoting the moment generating function of $C_n$ by
\[ F_n(t) = \mathbb{E}\big[\exp(t\cdot C_n)\big], \]
the series expansion of $F_n$ around $t=0$ gives the ``raw'' moments $\mu_p(C_n) = \mathbb{E}\big[C_n^p\big]$ by
\[ F_n(t) = 1 + \mu_1(C_n) \cdot t + \frac{\mu_2(C_n) \cdot t^2}{2!} + \frac{\mu_3(C_n) \cdot t^3}{3!} + \dots\]
As we discuss in Section~\ref{S:observations}, this series converges for $\left| t \right| < n$.
The cumulants $\kappa_p(C_n)$ of $C_n$ are defined similarly by the series
\[ \log F_n(t) = \kappa_1(C_n) \cdot t + \frac{\kappa_2(C_n) \cdot t^2}{2} + \frac{\kappa_3(C_n) \cdot t^3}{3!} + \dots \]
The first two cumulants $\kappa_1$ and $\kappa_2$ are the mean and variance respectively. 
Letting $\tilde{F}_n(t)$ denote the moment generating function of the rescaled cost $\tilde{C}_n = \sqrt{n}\cdot (C_n - \pi^2/6)$ from equation \eqref{rescaled}, it follows that
\begin{equation} \tilde{F}_n(t) = \mathbb{E}\big[\exp(t\sqrt{n}\cdot C_n)\cdot\exp(-t\sqrt{n}\cdot \pi^2/6)\big] 
= e^{-t\sqrt{n}\pi^2/6}\cdot F_n(t\sqrt{n}),
\end{equation}
and
\begin{multline}
\log \tilde{F}_n(t) = \\ \left(\kappa_1(C_n)-\pi^2/6\right)\sqrt{n}\cdot t 
+ \frac{\kappa_2(C_n)\cdot n \cdot t^2}{2!} + \frac{\kappa_3(C_n)\cdot n^{3/2} \cdot t^3}{3!} + \dots 
\end{multline}
It is known (see for instance \cite[Section~2]{Tao} and its references) that a sufficient condition for a sequence of random variables to converge to a Gaussian distribution is that as $n\to \infty$, its first two cumulants converge (at all), and the third and following cumulants tend to zero. 
From \eqref{zeta2} and \eqref{varLimit} we have 
\[ \kappa_1(\tilde{C}_n) = \sqrt{n}\cdot (\kappa_1(C_n)-\pi^2/6) = O(1/\sqrt{n}) \to 0,\] and 
\[ \kappa_2(\tilde{C}_n) = n \cdot \kappa_2(C_n) \to 4\zeta(2) - 4\zeta(3).\]
For $p\geq 3$ the relation between the cumulants of $C_n$ and $\tilde{C}_n$ is
\[ \kappa_p(\tilde{C}_n) = \kappa_p(C_n)\cdot n^{p/2}\]
and therefore in order to establish the desired Gaussian scaling limit \eqref{rescaled}, it would suffice to show that for every $p\geq 3$, as $n\to \infty$, 
\begin{equation} \label{suffice}
\kappa_p(C_n)=o(n^{-p/2}).
\end{equation} 
It is worth mentioning already here that according to a theorem of Svante Janson \cite{Janson88}, it suffices to establish \eqref{suffice} for all sufficiently large $p$, and it will automatically hold for all $p\geq 3$. 
 
Actually we expect stronger bounds than \eqref{suffice} to hold. If as a comparison to $C_n$ we let $S_n$ be a sum of $n$ independent mean $1/n$ exponential variables, then the moment generating function of $S_n$ is 
\[
\frac1{(1-t/n)^n},\] and a simple calculation shows that its cumulants are given by 
\[\kappa_p(S_n) = \frac{(p-1)!}{n^{p-1}}.\]
Roughly speaking we expect the cost $C_n$ of bipartite matching to be qualitatively as regular as $S_n$, and therefore we expect the $p$:th cumulant of $C_n$ to be of order $1/n^{p-1}$ for fixed $p$ in the large $n$ limit. This is known to hold for $p=1$ and $p=2$ with constants $\zeta(2)$ and $4\zeta(2)-4\zeta(3)$ respectively. For $p=3$, a preliminary calculation indicates that 
\[ \kappa_3(C_n) \sim \frac{24\zeta(2) - 48\zeta(3) + 18\zeta(4) + 48\zeta(5) - 24\zeta(2)\zeta(3)}{n^2}.\] 
Here the numerator is approximately $3.5787$ and the prediction agrees well with exact values for small $n$, but the result will have to be confirmed as some supposedly minor terms were discarded without proper estimates.

\section{The recursion and its applications} \label{S:consequences}

In this section we generalize the minimum cost perfect matching problem to incomplete matchings involving a ``ghost vertex''. We explore a certain recursive equation for their distributions, but postpone its proof to Section~\ref{S:proof}.

\subsection{Matchings of $k$ edges on $K_{m,n}$} \label{S:bipartiteMatching}
We consider a complete bipartite graph of $m$ by $n$ vertices and assume that the $mn$ edges are assigned independent mean 1 exponentially distributed costs. For $k\leq \min(m,n)$ we ask for the minimum total cost of a $k$-matching, by which we mean a set of $k$ edges of which no two have a vertex in common. We denote this minimum cost by $C_{k,m,n}$. In particular $C_{n,n,n}$ is the minimum cost of an $n$ by $n$ perfect matching, which was denoted $C_n$ in Section~\ref{S:prelGaussian}. 

The mean and variance of $C_{k,m,n}$ are already quite well understood (see \cite{W10} and its references). 
We show here that the density function of $C_{k,m,n}$ can be found explicitly using a certain recursion. The recursion works in time polynomial in $\max(k,m,n)$, but we do not further discuss questions of computational efficiency.

\subsection{Adding a ghost vertex} \label{S:ghost}
Following an idea from \cite{W08easy} (in a sense going back to the paper \cite{BCR02} by Marshall Buck, Clara Chan and David Robbins), we introduce a special extra vertex $g$ that we call the \emph{ghost vertex}. Denoting the two vertex sets of the original graph by $A = \{a_1,\dots, a_m\}$ and $B = \{b_1, \dots, b_n\}$, we extend the first vertex set to $A+g = \{a_1,\dots, a_m, g\}$, and in the extended graph we connect the ghost vertex to all vertices in $B$. 

The ghost vertex is special in two respects: First, the edges from $g$ have costs that are exponential of mean $1/h$ where eventually $h$ will tend to zero (so that the ``ghost edges'' become expensive). And second, the way we modify the matching problem to the extended graph is that we allow any number of edges from $g$ (but still at most one from every other vertex). We deviate from standard terminology and still call this a ``matching''.

When $h$ is small, the edges from $g$ will most likely be very expensive. The probability that some edge from $g$ participates in the minimum cost $k$-matching turns out to be of order $h$, and more generally the probability that exactly $d$ edges from $g$ participate is of order $h^d$ (assuming that $m\geq k$ so that solutions to the original problem exist). 

For $h>0$ we let $I_{k,m,n}(d)$ be the indicator variable for the event that exactly $d$ edges from $g$ participate in the optimal solution to the $k$-matching problem. We let $C_{k,m,n}$ denote the cost of the solution to the extended matching problem, even though the whole model now depends implicitly on $h$. This is a slight ``abuse of notation'', but setting $h=0$ will make the ghost edges infinitely expensive and give back the original model. 

For $d\geq 0$ we introduce the ``ghost generating function'', which is a function of the complex variable $t$ defined (where it converges) by
\begin{equation} \label{ghostGenerating} 
F_{k,m,n}^{(d)}(t) = \lim_{h\to 0} \frac1{h^d}\cdot \mathbb{E}\left[\exp(t\cdot C_{k,m,n})\cdot I_{k,m,n}(d)\right].
\end{equation}
We have chosen this notation because of certain similarities to a $d$:th derivative with respect to $h$.
The case $d=0$ corresponds to the original problem, which is why we call the extra vertex a ``ghost'' vertex. When $h\to 0$, what happens is to a first approximation that the ghost vertex becomes irrelevant as its edges get expensive, and we get the original problem back. The function $F_{k,m,n}^{(0)}(t)$ is therefore the moment generating function of $C_{k,m,n}$ in the ordinary sense. 

When $d\geq 1$ on the other hand, the function $F_{k,m,n}^{(d)}(t)$ is not in general the moment generating function of any probability distribution. This is because we normalize only by dividing by the appropriate power of $h$, which is not the same thing as \emph{conditioning} on $d$ ghost edges participating in the minimum matching. Therefore there is no reason that $F_{k,m,n}^{(d)}(0)$ should equal 1. It is still a moment generating function, but there is a nontrivial zeroth moment.

\subsection{Recursion} \label{S:recursion}
The ghost generating functions $F_{k,m,n}^{(d)}$ can be computed recursively through the following equation which is established in Section~\ref{S:proofRecursion} (Theorem~\ref{T:matchingRecursion}). We abbreviate by making the dependence on the complex variable $t$ implicit.
\begin{multline} \label{matchingRecursion}
F_{k,m,n}^{(d)}
= \left(1 - \frac{d\cdot t}{(m-k+d)\cdot n}\right)^{-1}
\\ \cdot \left( \frac1{m-k+d}\cdot F_{k-1,m,n-1}^{(d-1)}
+ \frac{d+1}{n} \cdot t \cdot F_{k,m,n}^{(d+1)}
+F_{k-1,m,n-1}^{(d)} \right)
\end{multline}
The base case is that $F_{0,m,n-k}^{(0)} = 1$.
Moreover, certain terms and conditions apply. When $d=k$, the second and third terms of the second factor are treated as zero, and when $d=0$,
\[ F_{k,m,n}^{(0)}
=  \frac{t}{n} \cdot F_{k,m,n}^{(1)}
+F_{k-1,m,n-1}^{(0)} 
\]
 This too might seem to follow from \eqref{matchingRecursion} by treating impossible terms as zero, but to be accurate we cannot apply \eqref{matchingRecursion} when $d=0$ and $k=m$, as that would involve a division by zero.

\subsection{Example: 3 by 3 perfect matching} \label{S:example}
We prove equation \eqref{matchingRecursion} in Sections~\ref{S:nesting}--\ref{S:proofRecursion}, but first make some comments on how the recursion works and what information we get from it. The recursion is essentially ``two-dimensional'': In order to find $F_{k,m,n}^{(0)}$ we compute a triangular array of functions $F_{k-i,m,n-i}^{(d)}$ for $0\leq i \leq k$ and $0\leq d \leq k-i$. Including the base case we need to compute $(k+1)(k+2)/2$ functions, all of them rational in the variable $t$. 

When for instance $k=m=n=3$ as in Figure~\ref{F:matchings}, we find that 
\[ F_{0,3,0}^{(0)} = 1, \qquad F_{1,3,1}^{(1)} = \frac{1}{3-t}, \qquad F_{2,3,2}^{(2)} = \frac{1}{(3-t)^2}, \qquad F_{3,3,3}^{(3)} = \frac{1}{(3-t)^3},\]
\[ F_{1,3,1}^{(0)} = \frac{3}{3-t}, \qquad F_{2,3,2}^{(1)} = \frac{6\,(5-t)}{(4-t)(3-t)^2}, \qquad F_{3,3,3}^{(2)} = \frac{9\,(19-9t+t^2)}{(4-t)(3-t)^4},\]
\[ F_{2,3,2}^{(0)} = \frac{6\,(6-t)}{(4-t)(3-t)^2}, \qquad F_{3,3,3}^{(1)} = \frac{18\,(99-65t+14t^2-t^3)}{(4-t)(3-t)^5},\]
and finally
\begin{equation} \label{finally} 
F_{3,3,3}^{(0)} = \frac{6\,(162-90t+16t^2-t^3)}{(4-t)(3-t)^5}.
\end{equation}
The first row consists of the cases where all edges in the solution are ghost edges, the second row are the cases with exactly one ordinary edge, and so on. The last function $F_{3,3,3}^{(0)}$ is the moment generating function for 3 by 3 perfect matching, denoted $F_3(t)$ in Section~\ref{S:prelGaussian}.

In principle, a probability distribution on the real numbers can always be recovered from its characteristic function, which is essentially the moment generating function restricted to purely imaginary $t$ (where it always converges). Recovering the distribution is particularly easy when the moment generating function is a rational function in $t$, as we can read off the density function from the partial fraction decomposition. In our example we can convert \eqref{finally} to
\[ F_{3,3,3}^{(0)} = \frac{54}{(3-t)^5} + \frac{72}{(3-t)^4} - \frac{30}{(3-t)^3} + \frac{36}{(3-t)^2} - \frac{36}{3-t} + \frac{36}{4-t},\]
from which we find that the density function of $C_{3,3,3}$ for $x\geq 0$ is
\begin{equation} \label{densityExample} 
f_{3,3,3}^{(0)}(x) = \left(\frac{9x^4}{4} + 12x^3 - 15x^2 + 36x -36\right) e^{-3x} + 36e^{-4x}.
\end{equation}
In general, a term \[ \frac1{(a-t)^b} \] in the moment generating function has the ``inverse'' \[ \frac{x^{b-1}}{(b-1)!}\cdot e^{-ax}\] in the density.
For instance, the coefficient $9/4$ in \eqref{densityExample} comes from $54/4!$. 

If we plot the function \eqref{densityExample} as in Figure~\ref{F:density3}, we get a sanity check by noting that the function indeed seems to take nonnegative values for $x\geq 0$. 
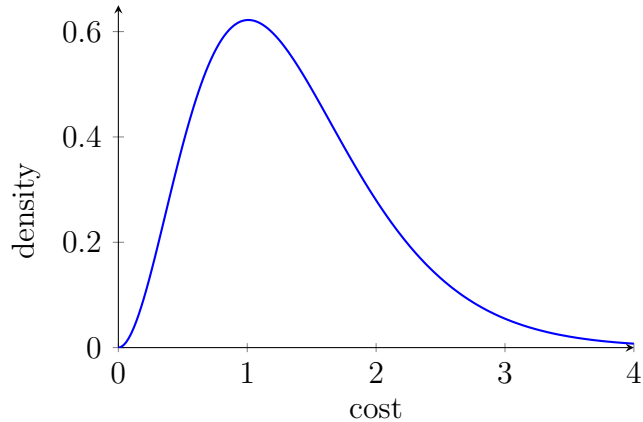
\begin{figure} [h]
\begin{center}
\begin{tikzpicture}
\begin{axis}[
    axis lines = left,
    xscale=1.0,
    yscale=0.8,
    xlabel = {cost}, 
    ylabel = {density},
    ymax=0.65,
]
\addplot [
        color=blue, thick 
    ]
    coordinates
    {
                          (0, 0.)
                       (0.02, 0.0012)
                       (0.04, 0.0046)
                       (0.06, 0.0101)
                       (0.08, 0.0176)
                       (0.10, 0.0269)
                       (0.12, 0.0379)
                       (0.14, 0.0504)
                       (0.16, 0.0642)
                       (0.18, 0.0793)
                       (0.20, 0.0954)
                       (0.22, 0.1125)
                       (0.24, 0.1304)
                       (0.26, 0.1490)
                       (0.28, 0.1681)
                       (0.30, 0.1877)
                       (0.32, 0.2076)
                       (0.34, 0.2277)
                       (0.36, 0.2479)
                       (0.38, 0.2681)
                       (0.40, 0.2883)
                       (0.42, 0.3083)
                       (0.44, 0.3280)
                       (0.46, 0.3474)
                       (0.48, 0.3665)
                       (0.50, 0.3851)
                       (0.52, 0.4032)
                       (0.54, 0.4207)
                       (0.56, 0.4376)
                       (0.58, 0.4539)
                       (0.60, 0.4696)
                       (0.62, 0.4845)
                       (0.64, 0.4987)
                       (0.66, 0.5122)
                       (0.68, 0.5249)
                       (0.70, 0.5368)
                       (0.72, 0.5479)
                       (0.74, 0.5582)
                       (0.76, 0.5677)
                       (0.78, 0.5764)
                       (0.80, 0.5844)
                       (0.82, 0.5915)
                       (0.84, 0.5978)
                       (0.86, 0.6034)
                       (0.88, 0.6082)
                       (0.90, 0.6123)
                       (0.92, 0.6156)
                       (0.94, 0.6183)
                       (0.96, 0.6202)
                       (0.98, 0.6214)
                       (1.00, 0.6220)
                       (1.02, 0.6220)
                       (1.04, 0.6213)
                       (1.06, 0.6201)
                       (1.08, 0.6183)
                       (1.10, 0.6159)
                       (1.12, 0.6131)
                       (1.14, 0.6097)
                       (1.16, 0.6059)
                       (1.18, 0.6016)
                       (1.20, 0.5969)
                       (1.22, 0.5918)
                       (1.24, 0.5863)
                       (1.26, 0.5805)
                       (1.28, 0.5743)
                       (1.30, 0.5678)
                       (1.32, 0.5611)
                       (1.34, 0.5540)
                       (1.36, 0.5468)
                       (1.38, 0.5393)
                       (1.40, 0.5316)
                       (1.42, 0.5237)
                       (1.44, 0.5157)
                       (1.46, 0.5075)
                       (1.48, 0.4992)
                       (1.50, 0.4907)
                       (1.52, 0.4822)
                       (1.54, 0.4736)
                       (1.56, 0.4649)
                       (1.58, 0.4562)
                       (1.60, 0.4474)
                       (1.62, 0.4386)
                       (1.64, 0.4298)
                       (1.66, 0.4210)
                       (1.68, 0.4122)
                       (1.70, 0.4035)
                       (1.72, 0.3947)
                       (1.74, 0.3860)
                       (1.76, 0.3773)
                       (1.78, 0.3688)
                       (1.80, 0.3602)
                       (1.82, 0.3518)
                       (1.84, 0.3434)
                       (1.86, 0.3351)
                       (1.88, 0.3269)
                       (1.90, 0.3188)
                       (1.92, 0.3107)
                       (1.94, 0.3028)
                       (1.96, 0.2950)
                       (1.98, 0.2874)
                       (2.00, 0.2798)
                       (2.02, 0.2723)
                       (2.04, 0.2650)
                       (2.06, 0.2578)
                       (2.08, 0.2507)
                       (2.10, 0.2438)
                       (2.12, 0.2369)
                       (2.14, 0.2303)
                       (2.16, 0.2237)
                       (2.18, 0.2173)
                       (2.20, 0.2110)
                       (2.22, 0.2048)
                       (2.24, 0.1988)
                       (2.26, 0.1928)
                       (2.28, 0.1871)
                       (2.30, 0.1814)
                       (2.32, 0.1759)
                       (2.34, 0.1705)
                       (2.36, 0.1653)
                       (2.38, 0.1601)
                       (2.40, 0.1551)
                       (2.42, 0.1503)
                       (2.44, 0.1455)
                       (2.46, 0.1409)
                       (2.48, 0.1364)
                       (2.50, 0.1320)
                       (2.52, 0.1277)
                       (2.54, 0.1235)
                       (2.56, 0.1195)
                       (2.58, 0.1155)
                       (2.60, 0.1117)
                       (2.62, 0.1080)
                       (2.64, 0.1044)
                       (2.66, 0.1008)
                       (2.68, 0.0974)
                       (2.70, 0.0941)
                       (2.72, 0.0909)
                       (2.74, 0.0878)
                       (2.76, 0.0847)
                       (2.78, 0.0818)
                       (2.80, 0.0790)
                       (2.82, 0.0762)
                       (2.84, 0.0735)
                       (2.86, 0.0709)
                       (2.88, 0.0684)
                       (2.90, 0.0660)
                       (2.92, 0.0636)
                       (2.94, 0.0613)
                       (2.96, 0.0591)
                       (2.98, 0.0570)
                       (3.00, 0.0549)
                       (3.02, 0.0529)
                       (3.04, 0.0510)
                       (3.06, 0.0491)
                       (3.08, 0.0473)
                       (3.10, 0.0456)
                       (3.12, 0.0439)
                       (3.14, 0.0422)
                       (3.16, 0.0407)
                       (3.18, 0.0391)
                       (3.20, 0.0377)
                       (3.22, 0.0363)
                       (3.24, 0.0349)
                       (3.26, 0.0336)
                       (3.28, 0.0323)
                       (3.30, 0.0311)
                       (3.32, 0.0299)
                       (3.34, 0.0287)
                       (3.36, 0.0276)
                       (3.38, 0.0265)
                       (3.40, 0.0255)
                       (3.42, 0.0245)
                       (3.44, 0.0236)
                       (3.46, 0.0227)
                       (3.48, 0.0218)
                       (3.50, 0.0209)
                       (3.52, 0.0201)
                       (3.54, 0.0193)
                       (3.56, 0.0185)
                       (3.58, 0.0178)
                       (3.60, 0.0171)
                       (3.62, 0.0164)
                       (3.64, 0.0158)
                       (3.66, 0.0151)
                       (3.68, 0.0145)
                       (3.70, 0.0139)
                       (3.72, 0.0134)
                       (3.74, 0.0128)
                       (3.76, 0.0123)
                       (3.78, 0.0118)
                       (3.80, 0.0113)
                       (3.82, 0.0109)
                       (3.84, 0.0104)
                       (3.86, 0.0100)
                       (3.88, 0.0096)
                       (3.90, 0.0092)
                       (3.92, 0.0088)
                       (3.94, 0.0085)
                       (3.96, 0.0081)
                       (3.98, 0.0078)
                       (4.00, 0.0075)
}; 
\end{axis}
\end{tikzpicture}

\caption{The density function $(9x^4/4 + 12x^3 - 15x^2 + 36x -36) e^{-3x} + 36e^{-4x}$ of the cost $C_{3,3,3}$ of the 3 by 3 minimum perfect matching.}
\label{F:density3}
\end{center}
\end{figure}
The density function also appears to be log-concave, but I cannot say if this is of any significance, whether it holds more generally, or if there is a simple reason for it. As was mentioned in Section~\ref{S:prelGaussian}, the densities for $n$ by $n$ perfect matching for $n\leq 25$ are shown in Figure~\ref{F:densities}.

We can find the moments and cumulants of $C_{3,3,3}$ from the power series expansions of $F_{3,3,3}^{(0)}$ and its logarithm respectively. The first few terms are
\[ F_{3,3,3}^{(0)} = 1+\frac{49t}{36} + \frac{1529t^2}{1296} + \frac{12811t^3}{15552} + \dots\]
and
\[ \log F_{3,3,3}^{(0)} = \frac{49t}{36} + \frac{73t^2}{288} + \frac{8185t^3}{139968} + \dots\]
from which it follows that $C_{3,3,3}$ has mean $49/36$, variance $73/144$, and third cumulant $8185/23328$. Notice that the mean $49/36$ is equal to $1+1/4+1/9$ in accordance with equation \eqref{parisi}.

We note in passing that when $F^{(0)}_{3,3,3}$ is written as in \eqref{finally}, the factor 6 in the numerator is the number of 3-matchings. We can see that it must be because when $t\to -\infty$, the expectation of $\exp(t\cdot C_{3,3,3})$ is dominated by the cases where there is an extremely cheap matching, and if $\epsilon\to 0$, the six events of a particular matching having cost at most $\epsilon$ become almost disjoint. Therefore in the limit $t\to -\infty$, the asymptotics is
\[ F^{(0)}_{3,3,3}(t) \sim \frac{6}{(1-t)^3} \sim \frac6{(-t)^3}.\]

We can derive a ``formula'' for the higher cumulants of $C_{3,3,3}$ by
factoring the numerator of \eqref{finally} in terms of its three complex zeros. Rescaling so that each factor is 1 at $t=0$, we obtain 
\begin{equation} \label{factorization}
 F_{3,3,3}^{(0)}(t) = \frac{(1-t/\rho_1)(1-t/\rho_2)(1-t/\rho_3)}{(1-t/3)^5(1-t/4)}.
 \end{equation}
Here one of the zeros is real, say $\rho_1$, and the other two are complex conjugates. Numerically $\rho_1\approx 3.509$ and $\rho_{2,3} \approx 6.246\pm 2.677i$.
The logarithm of \eqref{factorization} can be written
\begin{multline} \label{log}
\log F_{3,3,3}^{(0)}(t)
= \log(1-t/\rho_1) + \log(1-t/\rho_2) + \log(1-t/\rho_3)\\ - 5\log(1-t/3)-\log(1-t/4).
\end{multline}
Using the series $\log(1-x) = -x-x^2/2-x^3/3-\dots$ we find that the coefficient of $t^p$ in the Taylor expansion of \eqref{log} is 
\[ \frac1p\cdot\left(\frac{5}{3^p} + \frac1{4^p} - \frac1{\rho_1^p} - \frac1{\rho_2^p} - \frac1{\rho_3^p}\right).\] 
Since this coefficient is $\kappa_p/p!$, the $p$:th cumulant of $C_{3,3,3}$ is 
\[ \kappa_p = (p-1)! \cdot \left( \frac{5}{3^p} + \frac1{4^p} - \frac1{\rho_1^p} - \frac1{\rho_2^p} - \frac1{\rho_3^p}\right).\] 
We know numerically that $t=3$ is the smallest in absolute value of the zeros and poles of $F_{3,3,3}$ (this also follows from Proposition~\ref{P:zeroFree3} in Section~\ref{S:zf3}). This means that the term $5/3^p$ is the dominant one, and for large $p$, \[\kappa_p(C_{3,3,3}) \sim \frac{5(p-1)!}{3^p}.\]

In this example the zeros are all to the right of the abscissa of convergence $\operatorname{Re}(t) = 3$, and therefore only appear through analytic continuation of $F_{3,3,3}(t)$. But since the function is rational in $t$, continuation to a meromorphic function in the complex plane is automatic.

The moment generating functions (or equivalently, Laplace transforms) of $C_{k,m,n}$ for small $k$, $m$ and $n$ were studied by Sven Erick Alm and Gregory Sorkin \cite{AS02}. Although no result like \eqref{finally} on $F_{3,3,3}$ is stated explicitly in their paper, they showed how to compute $F_{k,m,n}(t)$ for $k\leq 4$ and any $m$ and $n$. 

\subsection{Computing the first few moments of $C_{k,m,n}$}
We can compute the mean and variance of $C_{k,m,n}$ from \eqref{matchingRecursion} without computing all the ghost generating functions. It suffices to compute the Taylor series of the functions $F_{k-i,m,n-i}^{(0)}$ up to the coefficient of $t^2$, those of $F_{k-i,m,n-i}^{(1)}$ to the linear term, and the constant term of $F_{k-i,m,n-i}^{(2)}$. 
In general, if we want to find the first $p$ moments of $C_{k,m,n}$, we only need to compute the Taylor series of $F_{k-i,m,n-i}^{(d)}$ up to the coefficient of $t^{p-d}$ for $0\leq d\leq p$. 

As was pointed out by Don Coppersmith and Gregory Sorkin (\cite{CS99}, see also \cite{AS02, LW04, NPS05}), the formula \eqref{parisi} for the expectation of $C_{n,n,n}$ can be generalized to arbitrary $k,m,n$ by
\begin{equation} \label{CoppersmithSorkin}
 \mathbb{E}(C_{k,m,n}) = \sum_{\substack{i,j\geq 0,\\i+j<k}} \frac1{(m-i)(n-j)}
 \end{equation}
Although not immediately obvious, it follows by induction that setting $k=m=n$ in \eqref{CoppersmithSorkin} gives the single sum \eqref{parisi}.
The formula \eqref{CoppersmithSorkin} in turn can be established inductively from \eqref{matchingRecursion}. The statement is that as $t\to 0$,
\begin{equation}
 F_{k,m,n}^{(0)} = 1 +  t\cdot \sum_{\substack{i,j\geq 0,\\i+j<k}} \frac1{(m-i)(n-j)} + O(t^2),
\end{equation}
and 
\begin{equation}
F_{k,m,n}^{(1)} = \frac1{m} + \frac1{m-1}+\dots + \frac1{m-k+1} + O(t).
\end{equation}
This is essentially how the proof of \eqref{parisi} in \cite{W08easy} works. In \cite{W10} there is a similar expression for the variance of $C_{k,m,n}$.

\section{Proof of the recursion} \label{S:proof}

After having explored some of the consequences of the recursion \eqref{matchingRecursion} in Section~\ref{S:consequences}, we devote Section~\ref{S:proof} to its proof. 

In order to conveniently work with ghost edges and limits like \eqref{ghostGenerating}, we introduce the following notation: If $X$ is a random variable in a model that depends on the parameter $h$, we let 
\begin{equation} \label{notation} 
\mathbb{E}^{(d)} [X] = \lim_{h\to 0} \frac{\mathbb{E} [X]}{h^d}
\end{equation}
whenever this limit is well-defined.
The ghost generating function defined by \eqref{ghostGenerating} can now be written as 
\[
F^{(d)}_{k,m,n}(t) = \mathbb{E}^{(d)} [\exp(t\cdot C_{k,m,n})\cdot I_{k,m,n}(d)].
\]

In the rest of Section~\ref{S:proof} we treat $t$ as a fixed complex number without worrying about where in the complex plane the various expected values involving a factor of the form $\exp(t\cdot C)$ are convergent. It would be possible to specify the abscissa of convergence in each case, but it suffices to note that all such expected values converge when $\operatorname{Re}(t) \leq 0$. 

\subsection{Integrating over the cost of a ghost edge} \label{S:integration}

Recall that $I_{k,m,n}(d)$ is the indicator variable for the event that exactly $d$ ghost edges participate in the optimal solution to the $k$-matching problem on the vertex sets $\{a_1,\dots, a_m, g\}$ and $\{b_1,\dots, b_n\}$, and that we refer to the problem as ``matching'' even though we allow several edges from the ghost vertex. The following lemma relates the $k$-matching problem on this graph to the $(k-1)$-matching problem on the induced subgraph with vertex sets $\{a_1,\dots, a_m, g\}$ and $\{b_1,\dots, b_{n-1}\}$, in other words with the vertex $b_n$ deleted from $B$. 
In the following, these two graphs will be denoted $(A+g,B)$ and $(A+g, B-b_n)$ respectively.
\begin{Lemma} \label{L:integration}
\begin{multline} \label{integration}
\mathbb{E}^{(d)}\left[\exp(t\cdot C_{k,m,n}) \cdot I_{k-1,m,n-1}(d)\right]
\\= \mathbb{E}^{(d)}\left[\exp(t\cdot C_{k-1,m,n-1}) \cdot I_{k-1,m,n-1}(d)\right]
\\+ \frac{d+1}{n} \cdot t \cdot \mathbb{E}^{(d+1)}\left[\exp(t\cdot C_{k,m,n})\cdot I_{k,m,n}(d+1)\right].
\end{multline}
\end{Lemma}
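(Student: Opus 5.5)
The plan is to condition on every edge cost except that of the single ghost edge $(g,b_n)$, integrate explicitly over its cost $x$ (exponential with rate $h$), and then use the symmetry among the vertices of $B$. All edge costs are continuous and independent, so ties occur with probability zero and optimal solutions may be assumed unique throughout.

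\textbf{Step 1 (structure of $C_{k,m,n}$ as a function of $x$).} With all other costs fixed, write $c = C_{k-1,m,n-1}$, the optimum on $(A+g,B-b_n)$. Let $D$ be the minimum cost of a $k$-matching on $(A+g,B)$ not using $(g,b_n)$. Neither $c$ nor $D$ depends on $x$. A $k$-matching that uses $(g,b_n)$ is exactly that edge together with a $(k-1)$-matching on $(A+g,B-b_n)$, because $g$ may carry several edges. Hence
\[ C_{k,m,n}(x) = \min(x+c,\, D). \]
I also claim $D\ge c$. Take any $k$-matching avoiding $(g,b_n)$. If it covers $b_n$ through some $(a_i,b_n)$, replacing that edge by $(g,b_n)$ at cost $0$ gives a competitor for $c$. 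If it does not cover $b_n$, deleting any edge gives a $(k-1)$-matching on $(A+g,B-b_n)$. Since costs are nonnegative, in both cases the cost is at least $c$. Consequently $C_{k,m,n}(0)=c$. Note that $I_{k-1,m,n-1}(d)$ does not depend on $x$ either.

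\textbf{Step 2 (integration by parts in $x$).} Put $f(x)=\exp(t\min(x+c,D))$. Then $f$ is absolutely continuous with $f'(x)=t\,e^{t(x+c)}\mathbf 1[x+c<D]$. Integrating $\int_0^\infty f(x)\,he^{-hx}\,dx$ by parts gives
\[ \mathbb E_x[f(x)] = e^{tc} + \frac{t}{h}\int_0^\infty h e^{-hx}\, e^{t(x+c)}\mathbf 1[x+c<D]\,dx = e^{tc} + \frac th\,\mathbb E_x\!\left[e^{tC_{k,m,n}}\mathbf 1[(g,b_n)\text{ used}]\right]. \]
Here I work with $\operatorname{Re}(t)\le 0$, so the boundary term at infinity vanishes; the general case follows by analytic continuation. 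Multiplying by $I_{k-1,m,n-1}(d)$, taking full expectation and dividing by $h^d$, the first term gives the first right-hand term of \eqref{integration} in the limit $h\to0$. The second term becomes
\[ t\cdot \lim_{h\to 0} h^{-(d+1)}\,\mathbb E\!\left[e^{tC_{k,m,n}}\,\mathbf 1[(g,b_n)\text{ used}]\,I_{k-1,m,n-1}(d)\right]. \]

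\textbf{Step 3 (identify the indicator, then use symmetry).} If $(g,b_n)$ is used, then $C_{k,m,n}=x+c$, and by uniqueness the optimum minus $(g,b_n)$ is the optimal solution on $(A+g,B-b_n)$. Hence on this event $I_{k-1,m,n-1}(d)=I_{k,m,n}(d+1)$, and the expectation in Step 2 equals
\[ \mathbb E\!\left[e^{tC_{k,m,n}}I_{k,m,n}(d+1)\mathbf 1[(g,b_n)\text{ used}]\right]. \]
The model is invariant under permutations of $B$, so this quantity is the same with $b_n$ replaced by any $b_j$. Summing over $j$ counts each solution with exactly $d+1$ ghost edges exactly $d+1$ times. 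Therefore the quantity equals $\frac{d+1}{n}\,\mathbb E[e^{tC_{k,m,n}}I_{k,m,n}(d+1)]$ exactly, for each $h$, which yields the second term of \eqref{integration}.

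\textbf{Main obstacle.} The substance is Step 1, namely the inequality $D\ge c$ and the representation as a minimum; everything after it is bookkeeping. The remaining technical issue is the existence of the limits defining $\mathbb E^{(d)}$ and $\mathbb E^{(d+1)}$, which I would handle as follows:
\begin{itemize}
\item The identity before the limit holds exactly for every $h>0$.
\item Two of its three terms, $\mathbb E[e^{tC_{k,m,n}}I_{k-1,m,n-1}(d)]/h^d$ and $\mathbb E[e^{tC_{k-1,m,n-1}}I_{k-1,m,n-1}(d)]/h^d$, are the ones appearing in the lemma.
\item Each such term is a finite combination of integrals that are polynomial-times-exponential in $h$. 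Hence each $h^{-d}$-normalized expectation has a limit whenever the lower-order terms vanish, as the paper's $h^d$ heuristic asserts.
\item I would therefore state the lemma as an identity valid whenever the limits exist, and justify their existence by this dominated-convergence argument.
\end{itemize}
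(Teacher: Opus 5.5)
Your proof is correct and follows the same route as the paper. You fix all other costs and view $C_{k,m,n}$ as the function $\min(x+c,D)$ of the cost $x$ of $(g,b_n)$. You then apply the fundamental theorem of calculus in $x$, observe that the derivative is $t\,e^{tC_{k,m,n}}$ exactly when $(g,b_n)$ is used, and obtain the factor $(d+1)/n$ from symmetry over $B$.

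The one refinement is that you integrate by parts against the density $he^{-hx}$ instead of Lebesgue measure. This gives an identity that is exact for every $h>0$, replacing the paper's asymptotic steps (comparing $\phi(\infty)$ with $\phi$ at the random cost, and approximating $e^{-hx}$ by $1$). It also means you apply the symmetry over $B$ only after $x$ has been randomized, which is where it actually holds.
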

Notice first that the right-hand side of equation \eqref{integration} can be written in terms of ghost generating functions as
\[ F^{(d)}_{k-1,m,n-1}(t) +  \frac{d+1}{n} \cdot t \cdot F^{(d+1)}_{k,m,n}(t). \]
The left-hand side on the other hand has a mismatch of indices as it compares $C_{k,m,n}$ to $I_{k-1,m,n-1}(d)$. The lemma shows that it can still be expressed using ghost generating functions.

\begin {proof} [Proof of Lemma~\ref{L:integration}]
For the purpose of this proof we let $x$ be the cost of the edge $e$ from the ghost vertex $g$ to the last vertex $b_n$ of $B$. Still treating $t$ as a fixed complex number, we let
\[ \phi(x) = \exp(t\cdot C_{k,m,n})\cdot I_{k-1,m,n-1}(d) \] 
and think of this as a function of $x$, conditioning on all other edge costs.

Notice that the factor $I_{k-1, m, n-1}(d)$ is independent of $x$ and just says that we are interested only in the cases where the ghost vertex contributes $d$ edges to the minimum $(k-1)$-matching on the subgraph $(A+g, B - b_n)$. So let us suppose that this is the case as otherwise $\phi(x)$ is constant zero, and call this $(k-1)$-matching $M$.

For small $x$ the edge $e$ will participate in the optimal $k$-matching $N$ on $(A+g, B)$, and $C_{k,m,n}$ will increase linearly. As $x$ increases, we reach a point where $e$ becomes too expensive and no longer participates in $N$, after which $C_{k,m,n}$, and thereby also $\phi(x)$, will be constant. 
In principle, $C_{k,m,n}$ as a function of $x$ has the simple behavior shown in Figure~\ref{F:behavior}.
\begin{figure} [h]
\begin{center}
\begin{tikzpicture}

\draw[->] (0,0)--(0,3);
\draw[->] (0,0)--(4,0);

\node at (2,-0.4) {$x$};
\node at (-0.6,1.5) {$C_{k,m,n}$};

\draw[thick, blue] (0,1)--(1.5,2.5)--(4,2.5);

\end{tikzpicture}

\caption{The graph of $C_{k,m,n}$ as a function of $x$.}
\label{F:behavior}
\end{center}
\end{figure}
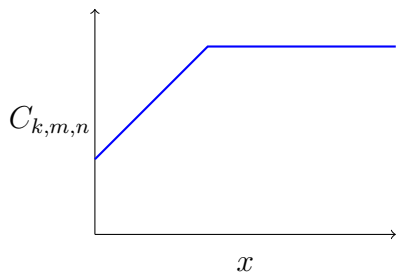

Using the fundamental theorem of calculus we can write
\begin{equation} \label{calculus}
 \phi(\infty) = \phi(0) + \int_0^\infty \phi'(x)\,dx.
 \end{equation}
 
We want to analyze the behavior of each of the three terms of \eqref{calculus} as we take expectation over $x$ assuming exponential distribution of mean $1/h$, and then let $h\to 0$.
 
Starting with the left-hand side, as $h\to 0$, 
\begin{equation} \label{firstTerm}
\mathbb{E}\left[\phi(\infty)\right] \sim \mathbb{E}\left[\exp(t\cdot C_{k,m,n}) \cdot I_{k-1,m,n-1}(d)\right].
\end{equation} 
This is because for small $h$ it is unlikely that the edge $e$ will participate in $N$. Therefore letting $h\to 0$ amounts to essentially the same thing as letting $x\to \infty$.
Notice that even if we condition on $d$ ghost edges participating in an optimal matching on the subgraph $(A+g, B - b_n)$, that does not make it any more likely that the edge $(g, b_n)$ will be of reasonable cost when $h\to 0$.

Next we turn to the right-hand side of \eqref{calculus}. For the first term, \[\phi(0) = \exp(t\cdot C_{k-1,m,n-1}) \cdot I_{k-1,m,n-1}(d),\] since when $e$ has zero cost, we get the minimum $k$-matching on the whole graph $(A+g, B)$ by combining $e$ with the minimum $(k-1)$-matching $M$ on $(A+g, B - b_n)$. No other $k$-matching on $(A+g, B)$ can beat this, since such a matching will have to include as a subset a $(k-1)$-matching on $(A+g, B - b_n)$.  
Taking expected values we get 
\begin{equation} \label{secondTerm}
\mathbb{E}\left[\phi(0)\right] = \mathbb{E}\left[\exp(t\cdot C_{k-1,m,n-1}) \cdot I_{k-1,m,n-1}(d)\right].
\end{equation}
Notice that this too is of order $h^d$ as $h \to 0$.

We now turn to the last term of \eqref{calculus}, still assuming that $I_{k-1,m,n-1}(d) = 1$. A key observation is that the derivative of $C_{k,m,n}$ with respect to $x$ is 1 when the edge $e$ participates in the minimum cost $k$-matching on $(A+g,B)$, and $0$ when it doesn't (again see Figure~\ref{F:behavior}). We are assuming that there are $d$ ghost edges already in $M$, so if $e$ participates in the minimum $k$-matching $N$ on $(A+g, B)$, then $N$ will contain $d+1$ ghost edges.  
Taking expectations over all edge costs except the cost $x$ of the edge $e$, it follows that
\begin{equation} \label{expectation1}
\mathbb{E}\left[\phi'(x)\right] = \frac{d+1}{n}\cdot t\cdot \mathbb{E}\left[\exp(t\cdot C_{k,m,n})\cdot I_{k,m,n}(d+1)\right].
\end{equation}
In this equation the factor $t$ in the right-hand side comes from the inner derivative when differentiating $\exp(t\cdot C_{k,m,n})$ with respect to $C_{k,m,n}$. The factor $(d+1)/n$ occurs because the edge $e$ is a generic ghost edge, and if the matching $N$ contains $d+1$ of the $n$ ghost edges, the probability that $e$ is one of them is $(d+1)/n$.  

Now we want to take expectation in \eqref{expectation1} also over the cost $x$ of the edge $e$, which has a density function of $he^{-hx}$.
Extracting the factor $h$, we find that as $h\to 0$, $e^{-hx}$ will converge to 1 uniformly on every bounded  interval. Therefore
\begin{equation} \label{thirdTerm}
  h \cdot \int_0^\infty \mathbb{E}\left[\phi'(x)\right] dx \sim \frac{d+1}{n}\cdot t\cdot \mathbb{E}\left[\exp(t\cdot C_{k,m,n})\cdot I_{k,m,n}(d+1)\right],
\end{equation}
where in the right-hand side we are now taking expectation also over the cost of the edge $e$.

Combining equations \eqref{firstTerm}, \eqref{secondTerm} and \eqref{thirdTerm}, rescaling by a factor $h^d$ and using the notation \eqref{notation}, we obtain equation \eqref{integration}.
\end{proof}

Lemma~\ref{L:integration} already provides a proof of the case $d=0$ of the recursion \eqref{matchingRecursion}, which states that 
\begin{equation} \label{d=0}
F_{k,m,n}^{(0)}
= \frac{t}{n} \cdot F_{k,m,n}^{(1)}
+F_{k-1,m,n-1}^{(0)}
\end{equation}
This is because in the case $d=0$, the indicator variables $I_{k-1,m,n-1}(0)$ and $I_{k,m,n}(0)$ can both be treated as constant 1 to a first order approximation when $h\to 0$.

\subsection{Nesting and locality} \label{S:nesting}

Lemma~\ref{L:integration} is one of the two main ingredients of the proof of the recursion \eqref{matchingRecursion}, the other being Lemma~\ref{L:independence} of Section~\ref{S:independence}. To prove Lemma~\ref{L:independence} we first we need to establish a couple of combinatorial results on minimum cost matchings.

In the proof, a property called \emph{nesting} is crucial. In its simplest form, involving no ghost vertex and no randomness, it says that in an $m$ by $n$ bipartite graph with fixed edge costs, every vertex that is matched in some minimum $(k-1)$-matching is also matched in some minimum $k$-matching. This is the Nesting Lemma of Marshall Buck, Clara Chan and David Robbins \cite[Lemma~2, Section~3.1]{BCR02}.
Another closely related property is what I have called \emph{locality}, which says that a non-optimal matching can always be improved using at most one new vertex on each side of the graph. 

Nesting and locality both follow from the so-called Hungarian algorithm for bipartite matching, and there are several variations of them in the literature. To make this section self-contained we state and prove Lemmas~\ref{L:locality} and \ref{L:nesting} below, although they would follow from \cite[Lemmas~12.1 and 12.2]{W10}. 
Notice that there is no randomness involved in these lemmas.

\begin{Lemma} [Locality] \label{L:locality}
Suppose that in a complete bipartite graph on vertex sets $A$ and $B$ the edges are assigned nonnegative costs. If $M$ is a $k$-matching which is not of minimum cost, then there is another $k$-matching $M'$ of smaller cost than $M$ such that at most one vertex in $A$ and at most one in $B$ are matched in $M'$ but not in $M$. 
\end{Lemma}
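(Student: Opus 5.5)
We will use the standard symmetric difference argument from the theory of augmenting paths. Let $M^*$ be a minimum cost $k$-matching and consider the symmetric difference $M\,\triangle\,M^*$. It is a disjoint union of alternating paths and cycles, since every vertex has degree at most 2 in it. Let the components be $P_1,\dots,P_r$. Each component $P_i$ contains some number of edges from $M$ and some from $M^*$; because both matchings have $k$ edges, the totals over all components agree.

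\textbf{Classifying the components.} Cycles and even-length paths are balanced, meaning they contain equally many edges from $M$ and $M^*$. An odd-length path is unbalanced by exactly one edge. If it has an extra $M^*$-edge, call it $M^*$-heavy; if it has an extra $M$-edge, call it $M$-heavy. The number of $M^*$-heavy paths equals the number of $M$-heavy paths.

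\textbf{Finding a "unit" that improves $M$.} We group the components into units:
\begin{itemize}
\item each balanced component forms a unit on its own;
\item the unbalanced paths are paired arbitrarily, each $M^*$-heavy path with one $M$-heavy path.
\end{itemize}
Swapping $M$ for $M^*$ on a unit $U$, meaning we replace $M$ by $M\,\triangle\,U$, gives another $k$-matching, because the components are vertex-disjoint. Write $\delta(U)=c(M^*\cap U)-c(M\cap U)$. The sum of $\delta(U)$ over all units is $c(M^*)-c(M)<0$, so some unit has $\delta(U)<0$. We set $M'=M\,\triangle\,U$ for that unit; it is a $k$-matching of smaller cost.

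\textbf{Counting newly matched vertices.} It remains to check that $M'$ matches at most one new vertex in $A$ and at most one in $B$. A vertex is newly matched only if it is an endpoint of a path in $U$ whose end edge belongs to $M^*$. The cases are:
\begin{itemize}
\item \emph{A cycle} has no endpoints, so it creates no new vertices.
\item \emph{An even path} has endpoints on the same side, one end edge in $M$ and one in $M^*$. It therefore frees one vertex and newly covers one vertex, both on the same side, which is at most one new vertex.
\item \emph{An $M^*$-heavy odd path} has both end edges in $M^*$. Being odd, its endpoints lie on opposite sides, so it newly covers exactly one vertex in $A$ and one in $B$.
\item \emph{An $M$-heavy odd path} has both end edges in $M$, so it newly covers nothing.
\end{itemize}
Hence a paired unit contributes exactly one new vertex in $A$ and one in $B$, and a single-component unit contributes at most one new vertex in total. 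This proves the claim.

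The main obstacle is making sure that swapping one odd path alone is not needed. On its own it would change the matching size, which is why the pairing is forced. The pairing can be arbitrary because only the sum of the $\delta$'s matters. One degenerate case to note is that an even path may have length 0 edges in one matching; this is covered by the general argument. Nonnegativity of costs is not even used.
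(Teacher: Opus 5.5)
Your proof is correct and follows the paper's own argument. Both take the symmetric difference with a minimum $k$-matching and group its components into balanced, vertex-disjoint units (the paper's ``pseudo-cycles'') by pairing each $M^*$-heavy odd path with an $M$-heavy one, then average the cost change over the units to find one whose switch improves $M$; your endpoint case analysis simply spells out the step the paper asserts without detail, that switching one unit newly covers at most one vertex on each side.
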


\begin{proof}
Let $M''$ be a minimum cost $k$-matching and consider the symmetric difference $H=M\triangle M''$ of $M$ and $M''$, in other words the set of edges that belong to exactly one of $M$ and $M''$. The components of $H$ are cycles and paths, and a path either has equally many edges from the two matchings, or one more from one than from the other. We can pair up every path with an excess of edges from $M$ with one with an excess of edges from $M''$ so that $H$ is partitioned into ``pseudo-cycles'', meaning edge sets that are balanced in the sense of containing equally many edges from $M$ as from $M''$, and so that distinct pseudo-cycles never share a vertex. If $H'\subseteq H$ is a pseudo-cycle, then $M\triangle H'$ is a $k$-matching that uses at most one vertex from $A$ and one from $B$ that are not matched in $M$. Since in total, the edges of $M\cap H$ cost more than the edges of $M''\cap H$, there must be a pseudo-cycle $H'$ such that $M\triangle H'$ has smaller cost than $M$, and $M\triangle H'$ is the desired matching.
\end{proof}

The Nesting Lemma is proved in essentially the same way, and can also be derived from locality. The version we need here differs slightly from the version in \cite{BCR02} in that we compare matchings where one vertex in $B$ is forbidden in the smaller one.

\begin{Lemma} [Nesting] \label{L:nesting}
Let $A=\{a_1, \dots, a_m\}$ and $B=\{b_1, \dots, b_n\}$ be the vertex sets of a complete bipartite graph with nonnegative edge costs. Let $M$ be a minimum cost $(k-1)$-matching on the induced subgraph $(A,B-b_n)$ on the vertices of $A$ and $B - b_n$. Then there is a minimum cost $k$-matching on the full graph $(A,B)$ that matches all vertices in $A$ that are matched by $M$, and consequently matches exactly one vertex of $A$ that is not matched by $M$.
\end{Lemma}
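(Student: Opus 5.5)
Take a minimum cost $k$-matching $N$ on $(A,B)$ chosen, among all minimum cost $k$-matchings, so that the symmetric difference $H=M\triangle N$ has as few edges as possible. The claim is that this $N$ matches every vertex of $A$ that $M$ matches. The argument mirrors the proof of Lemma~\ref{L:locality}: decompose $H$ into alternating paths and cycles and use minimality of both $M$ and $N$ to cancel components.

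The components of $H$ are alternating cycles and paths. An $M$-edge never touches $b_n$, since $M$ lives on $(A,B-b_n)$. The key step is a swap argument. Suppose a component $P$ of $H$ has the property that $M\triangle P$ is still a $(k-1)$-matching on $(A,B-b_n)$ and $N\triangle P$ is still a $k$-matching on $(A,B)$. Minimality of $M$ gives $c(M\triangle P)\ge c(M)$, so $c(P\cap N)\ge c(P\cap M)$. Minimality of $N$ gives the reverse inequality. Hence $N\triangle P$ is also a minimum $k$-matching, and its symmetric difference with $M$ is $H\setminus P$, which is smaller. This contradicts the choice of $N$, so no such component exists.

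I will apply this to two kinds of component.
\begin{itemize}
\item Balanced components (cycles, and paths with equally many edges of each kind): swapping preserves sizes. The only constraint is that $M\triangle P$ must avoid $b_n$, which fails only if $P$ contains $b_n$ via an $N$-edge.
\item Unbalanced paths: pair them as in Lemma~\ref{L:locality}. Since $|N|-|M|=1$, the number of paths with one extra $N$-edge exceeds the number with one extra $M$-edge by exactly one. Pair each $M$-excess path with an $N$-excess path that does not contain $b_n$, and apply the same swap to the union of the pair.
\end{itemize}
After this cancellation, $H$ consists of at most one component, which contains $b_n$ and has an $N$-excess. This component is either a single path with one more $N$-edge than $M$-edges, or a balanced path through $b_n$.

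Now I read off the vertex sets. Every vertex of $A$ matched by $M$ but not by $N$ is an endpoint of an $M$-excess path, or an endpoint of a balanced path ending in an $M$-edge at an $A$-vertex. Consider a balanced path with ends $b_n$ and some $a\in A$, where $a$ is matched by $M$ only. I claim it can be replaced by a cost-equivalent structure. Concatenating the path with an extra $N$-excess structure would give a single augmenting path, and the same swap argument then shows $N$ can be modified to match $a$. Once all such vertices are handled, $N$ matches every vertex of $A$ that $M$ matches. Since $|N|=k=|M|+1$, $N$ matches exactly one further vertex of $A$.

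The main obstacle is the bookkeeping of which pairings and swaps stay legal, in particular keeping $b_n$ out of $M$ and keeping the matching sizes right. A cleaner route I would try first is a two-step reduction. Apply the standard Nesting Lemma to $(A,B-b_n)$ to get a minimum $k$-matching there, if $k\le n-1$, that covers $V_A(M)$. Then compare with the optimum on $(A,B)$ via locality: the only new $B$-vertex available is $b_n$, and an improving pseudo-cycle that adds $b_n$ can be arranged to drop a $B$-vertex rather than an $A$-vertex of $V_A(M)$. Which of the two routes makes the parity and endpoint bookkeeping cleaner is where I expect to spend the effort.
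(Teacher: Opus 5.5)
Up to the cancellation step your argument is the paper's: decompose $H=M\triangle N$, switch balanced components, and pair $M$-excess with $N$-excess odd paths. Choosing $N$ to minimize $|M\triangle N|$ is a good device. It handles ties directly, whereas the paper assumes uniqueness and appeals to ``continuity''. Your swap inequality correctly rules out cycles, balanced paths not containing $b_n$, and all $M$-excess paths.

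The gap is in the end-game.

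First, your description of what survives is wrong. Because $|N|-|M|=1$, there is always exactly one $N$-excess path, and it need not contain $b_n$. Besides it there may also be a balanced path containing $b_n$, so $H$ can have two components.

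More seriously, the case you then set out to repair does not exist. A path with ends $b_n$ and some $a\in A$ cannot be balanced, because a path joining $B$ to $A$ has odd length. Your sketched repair (concatenating with ``an extra $N$-excess structure'' and swapping) is also not an argument. You never identify a legal swap, nor show that the result is still a minimum $k$-matching covering the $A$-vertices of $M$.

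The missing observation is parity:
\begin{itemize}
\item Since $b_n$ carries no $M$-edge, it has degree at most $1$ in $H$. So any component containing it is a path with $b_n$ as an endpoint.
\item A balanced path in a bipartite graph has even length, so its other endpoint also lies in $B$.
\item Hence every $A$-vertex on that path is interior, and therefore matched by both $M$ and $N$.
\end{itemize}
Now combine this with your own correct remark: an $A$-vertex matched by $M$ but not by $N$ must be the $A$-end of either an $M$-excess path or an even path with both ends in $A$. The second kind never contains $b_n$, since its interior $B$-vertices carry $M$-edges. So your balanced-swap step already eliminates it, and the proof is finished with no further construction.

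The two-step route in your last paragraph is unnecessary. Its key claim, that the improving pseudo-cycle can be arranged to drop a $B$-vertex, is exactly what would need proof. It also cannot start when $k=n$, since then $(A,B-b_n)$ has no $k$-matching.
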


\begin{proof}
We assume for simplicity that minimum cost matchings of a given number of edges are unique. By continuity this suffices, and in any case it holds with probability~1 in the random models we eventually study.
  
Let $N$ be the minimum cost $k$-matching on $(A,B)$, and let $H$ be the symmetric difference $M\triangle N$.
The components of $H$ are cycles and paths, and again we analyze these components using the idea of \emph{switching}: If $H'$ is a component of $H$, we can form the matchings $M\triangle H'$ and $N\triangle H'$. 

A cycle in $H$ can be switched in any direction, giving a new matching with the same number of edges and matching the same set of vertices. Since we assume that minimum matchings are unique, this contradicts the optimality of $M$ and $N$. Either $M\triangle H'$ will be cheaper than $M$, or $N\triangle H'$ will be cheaper than $N$. The same goes for a path with an even number of edges unless $b_n$ is one of the endpoints.

Consider the paths in $H$ with an odd number of edges and therefore one endpoint in $A$ and one in $B$. There must be one more such path with excess in $N$ than with excess in $M$, since in total $N$ has one more edge than $M$. Suppose there are at least two paths with excess in $N$. Then one of them does not have $b_n$ as an endpoint. In that case there are two paths, that one and one with excess in $M$, that can together be switched in any direction, leading again to a contradiction. Therefore there is exactly one path with an odd number of edges, and it has excess in $N$. 

This path may or may not have $b_n$ as an endpoint, and if it doesn't, there may or may not be another path in $H$ with an even number of edges and having $b_n$ as one of its endpoints (and therefore also the other endpoint in $B$). 
But in all those cases it is clear that the conclusion of the lemma holds: There is exactly one vertex in $A$ that is matched in $N$ but not in $M$.
\end{proof}

These lemmas can be adapted to a situation where the vertex set $A$ is extended with a ghost vertex. If the edge costs are fixed, the ghost vertex is special only in that it is allowed to have several edges in a matching. In order to apply Lemmas~\ref{L:locality} and \ref{L:nesting}, we can replace the single ghost vertex $g$ by $n$ distinct vertices, one for each edge of the original ghost vertex.
We get a bipartite graph on vertex sets $\{a_1,\dots, a_m, g_1,\dots, g_n\}$ and $\{b_1,\dots, b_n\}$. Each vertex $g_i$ has an edge to the vertex $b_i$, and to make the graph complete we may introduce edges of infinite cost from $g_i$ to $b_j$ for $i\neq j$. Applying Lemmas~\ref{L:locality} and \ref{L:nesting} to this setting gives the following conclusions:

\begin{Lemma} [Ghost locality] \label{L:ghostLocality}
Suppose that in a complete bipartite graph on vertex sets $A + g$ and $B$, the edges are assigned nonnegative costs. Allowing multiple edges from $g$, suppose that $M$ is a $k$-matching which is not of minimum cost. Then there is another $k$-matching $M'$ of smaller cost than $M$ such that either there is exactly one vertex in $A$ that is matched in $M'$ but not in $M$, and the ghost edges of the two matchings are the same, or there is exactly one ghost edge in $M'$ which is not in $M$, and the two matchings involve exactly the same vertices of $A$. 
\end{Lemma}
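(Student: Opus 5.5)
We will derive the statement from Lemma~\ref{L:locality} applied to the ``split ghost'' graph described just before the statement. Replace $g$ by $n$ distinct vertices $g_1,\dots,g_n$, where $g_i$ is joined to $b_i$ with the cost of the ghost edge $(g,b_i)$, and to every $b_j$ with $j\neq i$ by an edge of infinite cost (or, to stay with finite nonnegative costs, a sufficiently large cost $L$ exceeding the total cost of any matching under consideration). The extended $A$-side is $A' = A\cup\{g_1,\dots,g_n\}$.

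Under this correspondence, $k$-matchings on $(A+g,B)$ that allow multiple ghost edges correspond bijectively and cost-preservingly to $k$-matchings on $(A',B)$ of finite cost. A ghost edge $(g,b_i)$ corresponds to $(g_i,b_i)$, and distinct ghost edges go to distinct $b_i$, hence to distinct $g_i$. Since no finite-cost matching uses an edge $(g_i,b_j)$ with $i\neq j$, a minimum cost $k$-matching on $(A',B)$ is the image of a minimum one on $(A+g,B)$. So the image $\widetilde M$ of $M$ is not of minimum cost. Lemma~\ref{L:locality} then gives a cheaper $k$-matching $\widetilde M'$ on $(A',B)$ in which at most one vertex of $A'$ and at most one of $B$ are matched but not matched in $\widetilde M$. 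Since $\widetilde M'$ is cheaper than $\widetilde M$, it has finite cost and pulls back to a $k$-matching $M'$ on $(A+g,B)$ with smaller cost than $M$.

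It remains to translate ``at most one new vertex of $A'$'' into the dichotomy of the statement. Both $\widetilde M$ and $\widetilde M'$ have exactly $k$ edges, so they match equally many vertices of $A'$. If no new vertex of $A'$ is matched, then the matched $A'$-sets coincide. This means the same vertices of $A$ are matched and the same $g_i$ are matched. Because $g_i$ can only use the edge to $b_i$, the ghost edges are then identical as well.

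A matching with the same cost structure on the same vertex set is not excluded a priori, so this case needs care. The cleanest fix is to note that the proof of Lemma~\ref{L:locality} produces $M' = M\triangle H'$, where $H'$ is a pseudo-cycle that pairs one path with excess in $M''$ with one path with excess in $M$, or is a single cycle. Each such $H'$ introduces exactly one new $A'$-endpoint, unless it is a cycle or a path that starts and ends in $B$.

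We handle these exceptional $H'$ as follows. A cycle through some $g_i$ would use two edges at $g_i$, one of which has infinite cost, which is impossible. Cycles and $B$--$B$ paths avoiding the $g_i$ change only edges among $A$-vertices and fixed ghost edges, and still give a valid improvement with the same $A$-vertices. I would handle this degenerate case by observing that the statement intends ``at most one'' in the first alternative. Equivalently, any improvement touching the same vertex sets can be counted under the first alternative with zero new vertices, and if necessary one can phrase the first alternative as ``at most one''.

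In the main case exactly one new vertex $v\in A'$ is matched and one vertex of $A'$ becomes unmatched. If $v\in A$, then the set of matched $g_i$ changes only if the dropped vertex is some $g_j$. Here I will argue from the structure of $H'$: its alternating path ends at $v$ and at some vertex of $B$. A $g_j$ lies only on paths of length one, namely the single edge $(g_j,b_j)$, and such an edge could be switched separately. Locality can therefore be applied with the finest decomposition, giving ghost edges that are unchanged. If $v=g_i$, then $M'$ contains the new ghost edge $(g,b_i)$, and the symmetric argument shows that the dropped $A'$-vertex lies in $A$, so the matched vertices of $A$ are otherwise the same. This bookkeeping with the pseudo-cycle structure is the step I expect to be the main obstacle.
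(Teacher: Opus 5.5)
Your reduction to Lemma~\ref{L:locality} on the split graph is exactly what the paper does; the paper offers nothing beyond this reduction. Up to the conclusion ``at most one newly matched vertex of $A'=A\cup\{g_1,\dots,g_n\}$'' your argument is sound. The gap is in the final translation step. Your claim that a $g_j$ lies only on a path of length one in $H=\widetilde M\triangle M''$, which ``could be switched separately'', is false. Since $(g_j,b_j)$ is the only finite edge at $g_j$, the vertex $g_j$ has degree at most one in $H$ and is therefore the endpoint of a path. But that path continues from $b_j$ along the other matching's edge and can be arbitrarily long. Moreover, deleting $(g_j,b_j)$ alone from $\widetilde M$ is not a switch at all, since it leaves a $(k-1)$-matching. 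The available switches are whole components of $H$ (or pairs of odd paths), so there is no ``finer decomposition'' to appeal to. Your sentence for the case $v=g_i$ is also inconsistent: if the dropped vertex lies in $A$, then $M'$ does not match the same vertices of $A$ as $M$.

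This step cannot be repaired, because the dichotomy as literally worded does not hold. Take $A=\{a\}$, $B=\{b_1\}$, $k=1$, $c(a,b_1)=1$, $c(g,b_1)=3$, and $M=\{(g,b_1)\}$. The only cheaper $1$-matching is $\{(a,b_1)\}$. It has one new vertex of $A$ but has lost the ghost edge, and it contains no new ghost edge, so neither alternative holds. Here $H=\{(g_1,b_1),(a,b_1)\}$ is precisely a length-two path ending at $g_1$. Swapping the two costs and taking $M=\{(a,b_1)\}$ gives the mirror failure of the second alternative. As you noticed yourself, a $4$-cycle improvement inside $A\times B$ shows that ``exactly one'' must be weakened to ``at most one''.

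What Locality actually delivers, and all that the proof of Lemma~\ref{L:forward} uses, is the following. There is a cheaper $M'$ containing at most one edge that is either an ordinary edge at a vertex of $A$ unmatched by $M$, or a ghost edge not in $M$. In the first case the ghost edges of $M'$ form a subset of those of $M$. In the second case the vertices of $A$ matched by $M'$ form a subset of those matched by $M$. You should prove and state that version, and not try to reach the ``same ghost edges''/``same vertices of $A$'' wording.
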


\begin{Lemma} [Ghost nesting] \label{L:ghostNesting}
Consider a complete bipartite graph on vertex sets $A+ g$ and $B$. Allowing multiple edges from $g$, and supposing that the minimum cost matchings are unique, let $M$ be the minimum $(k-1)$-matching on the induced subgraph $(A+g,B-b_n)$, and let $N$ be the minimum $k$-matching on $(A+g,B)$.  

Then every vertex in $A$ that is matched by $M$ is also matched by $N$, and every ghost edge in $M$ is also in $N$. 
\end{Lemma}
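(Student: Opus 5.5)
The plan is to reduce to Lemma~\ref{L:nesting} through the ``split ghost'' construction described just before Lemma~\ref{L:ghostLocality}. Replace $g$ by $n$ vertices $g_1,\dots,g_n$. Each $g_i$ is joined to $b_i$ by an edge whose cost is that of the ghost edge $(g,b_i)$. All other edges $g_ib_j$ with $i\neq j$ get a very large cost $L$ rather than infinite cost, so that the graph is complete with finite nonnegative costs and Lemma~\ref{L:nesting} literally applies. Call the enlarged vertex set $A'=A\cup\{g_1,\dots,g_n\}$. Matchings in $(A+g,B)$ that may use several ghost edges then correspond bijectively and cost-preservingly to matchings in $(A',B)$ that avoid the expensive edges: a ghost edge $(g,b_i)$ corresponds to the edge $g_ib_i$. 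The same correspondence holds for $(A+g,B-b_n)$ and $(A',B-b_n)$.

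First I will check that for $L$ large enough the minimum $(k-1)$-matching $M'$ on $(A',B-b_n)$ and the minimum $k$-matching $N'$ on $(A',B)$ use no cost-$L$ edges. They are then the images of $M$ and $N$, and they are unique because $M$ and $N$ are. It suffices to take $L$ larger than the total cost of any matching in the original graph. This needs some feasible matching avoiding the $L$-edges, and that always exists because each $b_j$ can be matched to its own $g_j$. Uniqueness is part of the hypothesis, and it carries over since any competitor of equal cost would also avoid the $L$-edges.

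Next, apply Lemma~\ref{L:nesting} to $(A',B)$ with $M'$. Under uniqueness its proof shows more than the existence of some minimum $k$-matching with the stated property: since $N'$ is unique, $N'$ itself matches every vertex of $A'$ matched by $M'$. Translating back gives both claims.
\begin{itemize}
\item[(i)] A vertex $a\in A$ matched in $M$ is matched in $M'$, hence in $N'$, hence in $N$.
\item[(ii)] A ghost edge $(g,b_i)\in M$ means $g_i$ is matched in $M'$, hence $g_i$ is matched in $N'$. The only non-$L$ edge at $g_i$ is $g_ib_i$, so $g_ib_i\in N'$ and therefore $(g,b_i)\in N$.
\end{itemize}

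The main obstacle I expect is the technical point that Lemma~\ref{L:nesting} is stated as ``there is a minimum cost $k$-matching'', with finite nonnegative costs. Passing to ``the'' minimum matching $N$ needs the uniqueness assumption, which the lemma's proof already assumes, and choosing a finite $L$ avoids any issue with infinite costs. Point (ii) relies on each $g_i$ having only one usable edge, which is exactly why the ghost vertex is split this way.
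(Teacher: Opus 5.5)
Your proposal is correct and is essentially the paper's own argument: the paper also replaces $g$ by $n$ vertices $g_1,\dots,g_n$, each joined only to $b_i$ (with infinite-cost edges $g_ib_j$, $i\neq j$, to make the graph complete), and reads off ghost nesting from Lemma~\ref{L:nesting}. Your finite cost $L$, your check that uniqueness carries over so that the matching supplied by Lemma~\ref{L:nesting} must be $N'$ itself, and your observation in (ii) that $g_i$ has only one usable edge simply spell out details the paper leaves implicit.
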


In Lemma~\ref{L:ghostNesting} we can thus identify two possibilities: Either there is one more ghost edge in $N$ than in $M$, and in that case the ordinary vertices in $A$ that are matched are the same in the two matchings. Or there is an ordinary vertex in $A$ that is matched by $N$ but not by $M$, and in that case the two matchings contain exactly the same ghost edges.

In both cases there is a well-defined ``new'' edge in $N$: Either the new ghost edge, or the unique edge that matches a vertex in $A$ that is unmatched by $M$ (although there may be several other edges in $N$ that are not in $M$).

\subsection{Independence lemmas} \label{S:independence}

Continuing the proof of equation~\eqref{matchingRecursion}, we apply an argument that stems from \cite{W08easy, W08complete} and was refined in \cite[Theorem~12.3]{W10} and \cite{HW10}. Since we have already established the case $d=0$ of the recursion, we may assume throughout this subsection that $d\geq 1$.

Our goal is to establish Lemma~\ref{L:independence}, but first we prove a more technical ``sub-lemma''. In this lemma we are not taking the limit $h\to 0$, but treat $h$ as a fixed positive number.

\begin{Lemma} \label{L:forward}
Consider a complete bipartite graph with vertex sets $A + g = \{a_1,\dots, a_m, g\}$ and $B = \{b_1,\dots, b_n\}$, and with random edge costs as described earlier for a fixed $h>0$. Let us condition on the number $d$ of ghost edges in the minimum $(k-1)$-matching $M$ on the induced subgraph $(A+g, B - b_n)$, so that consequently this matching has $k-1-d$ ordinary edges and leaves $m-k+d+1$ vertices in $A$ unmatched. Let $N$ be the optimal $k$-matching on $(A+g, B)$.
Then 
\begin{enumerate}
\item The probability that $N$ has $d+1$ ghost edges (the ones in $M$ and another one) is 
\[ \frac{h}{m-k+d+1+h},\]
while for each of the $m-k+d+1$ vertices in $A$ that are unmatched by $M$, the probability of the vertex being matched by $N$ is \[ \frac{1}{m-k+d+1+h}.\]
\item The cost of $N$ is independent of whether or not the new edge in $N$ is a ghost edge.
\end{enumerate}
\end{Lemma}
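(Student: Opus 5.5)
The plan is to condition on almost everything, so that the only randomness left is a family of independent exponential variables that are \emph{symmetric} across the candidate ``new'' vertices. Memorylessness then gives both parts of Lemma~\ref{L:forward} at once. Throughout I would use the splitting of $g$ into vertices $g_1,\dots,g_n$ introduced before Lemma~\ref{L:ghostLocality}, with $g_j$ joined only to $b_j$ at rate $h$. This makes the ghost just another collection of vertices, and Lemmas~\ref{L:locality} and \ref{L:nesting} apply directly.

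\emph{Step 1: conditioning.} Let $U\subseteq A$ be the set of $m-k+d+1$ vertices unmatched by $M$. I would condition on the matching $M$ itself, hence also on $U$ and on which $g_j$ are used. I would also condition on the costs of all edges not incident to $U$ or to the unused ghost copies. The edges from $U$ and from unused ghost copies to $B-b_n$ are not in $M$. By Lemma~\ref{L:locality}, optimality of $M$ is then equivalent to a family of lower bounds on these edges: an edge $(u,b_j)$ with $u$ unmatched can only enter an improving $(k-1)$-matching by replacing $u$ as a new vertex. So the constraint reads $\mathrm{cost}(u,b_j)\geq c_j$, where $c_j$ is the cheapest way to free $b_j$ by an alternating rearrangement of $M$ plus removal of an edge. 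The threshold $c_j$ depends only on the conditioned edges and \emph{not on which unmatched vertex $u$ is used}. The same threshold applies to an unused ghost copy attached to $b_j$. Edges to $b_n$ are unconstrained, so $c_n=0$. By memorylessness, the excesses $\mathrm{cost}(u,b_j)-c_j$ are then conditionally independent exponentials, of rate $1$ for $u\in U$ and rate $h$ for a ghost edge.

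\emph{Step 2: structure of $N$.} By Lemma~\ref{L:ghostNesting}, $N$ consists of the vertices of $M$ together with exactly one new ``unit'': either one new $u\in U$ or one new ghost edge. For each $j$, let $D_j$ be the minimal cost of extending $M$ to a $k$-matching in which the new unit is attached to $b_j$, not counting the new edge. A symmetry argument like the one in Step~1 shows that $D_j$ is a function of the conditioned edges only and is the same for every candidate unit. Hence
\[ C_{k,m,n}=\min_j \Big(c_j+D_j+\min_{\text{units } v}\xi_{v,j}\Big), \]
where $\xi_{v,j}$ are the independent exponential excesses from Step~1.

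\emph{Step 3: conclusion.} The minimum of independent exponentials with rates $\lambda_v$ is attained at $v$ with probability $\lambda_v/\sum\lambda$, independently of the minimal value. Applying this for each $j$ and combining over $j$, the identity of the new unit is independent of $C_{k,m,n}$. It equals a given $u\in U$ with probability $1/(m-k+d+1+h)$, and equals a ghost edge with probability $h/(m-k+d+1+h)$. This is part~1, and part~2 follows from the same independence.

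\emph{Main obstacle.} The main obstacle is the set of $b_j$ that $M$ already matches via a ghost edge. Their ghost copy is used, so for those $j$ there is no rate-$h$ competitor, and the competing rates would be $m-k+d+1$ instead of $m-k+d+1+h$. To handle this I would first check whether attaching a new unit at such a $b_j$ can be rewritten, via an alternating path through the existing ghost edge, as attaching a ghost at another free $b$. The fact that all of $g$ is one vertex allowing multiple edges suggests the ghost competes at every $b_j$ with rate $h$. Failing that, I would verify that the bookkeeping in $D_j$ absorbs the discrepancy. I would also need to check carefully that the thresholds $c_j$ are genuinely independent of the unit, which is where Lemma~\ref{L:ghostLocality} does the work.
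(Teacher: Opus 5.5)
Your proposal is essentially the paper's proof. Both condition so that what remains at each $b_j$ is a race among independent exponential edges, whose winner is independent of the winning value and chosen with probability proportional to the rates. The paper conditions directly on the minimum of the remaining edges at each $b\in B$. You instead condition on $M$ and on the edges at its vertices, and then use memorylessness above the thresholds $c_j$. This is the same argument, with your version making the conditioning more explicit.

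The ``main obstacle'' you flag is not a real obstacle, and the lemma you already invoke in Step~2 removes it. By Lemma~\ref{L:ghostNesting}, every ghost edge of $M$ is also in $N$. So the new edge can never be incident to a $b_j$ that $M$ matches by a ghost edge. In your split picture this just says $D_j=\infty$ for such $j$: the copy $g_j$ is a vertex of $M$ that must stay matched, and its only finite edge goes to $b_j$, which is excluded.

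Hence the index $J$ attaining $\min_j\big(c_j+D_j+\min_v\xi_{v,J}\big)$ always has $g_J$ unused. This covers $b_n$, the $b_j$ matched by ordinary edges of $M$, and the $b_j$ unmatched by $M$. The competitors at $b_J$ are therefore exactly the $m-k+d+1$ vertices of $U$ at rate $1$ and $g_J$ at rate $h$. Your Step~3 then gives both parts of the lemma.

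Your first suggested fix rests on the wrong intuition. The ghost does \emph{not} compete at a ghost-matched $b_j$, and no rewriting through an alternating path is needed; those vertices simply never receive the new edge. Displacing an existing ghost edge in the optimal $N$ would contradict ghost nesting. Your fallback, checking that the bookkeeping in $D_j$ absorbs the discrepancy, is the right one. The paper's sentence ``the rates are $1$ for each ordinary edge, and $h$ for the ghost edge'' tacitly uses exactly this fact.
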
 

\begin{proof}
In $(A+g, B)$, we condition on 
\begin{enumerate}
\item The set of vertices in $A$ (that is, non-ghost vertices) that are matched in the minimum $(k-1)$-matching $M$ on $(A+g, B - b_n)$, and the costs of all their edges (including those to $b_n$).
\item The set of ghost edges in $M$ and their costs.
\item For each vertex in $B$, the minimum cost of those of its edges that are not included under 1 or 2 (including ghost edges).
\end{enumerate}

It may seem unclear at this point exactly what we are conditioning on, because we have to know first where the minimum $(k-1)$-matching $M$ on $(A+g, B - b_n)$ is located. But actually it suffices to condition on the vertex set, edges, and minima in 1, 2, 3 in order to verify that $M$ is indeed optimal. By Lemma~\ref{L:ghostLocality} (ghost locality), if the vertex set in 1 and the ghost edges in 2 did not give a minimum cost matching, then we would be able to find a cheaper matching using only the minimum cost edges in 3.

This means that conditioning on 1, 2, and 3, we can figure out the cost of $N$, as exactly one of its edges will be ``new'' and must therefore be one of the minimum cost edges in 3. At this point we apply one of the standard properties of the exponential distribution: If we condition on the minimum of a set of exponential variables, then the location of the minimum is distributed with probabilities proportional to the rates (inverse of the mean). Moreover, the value of the minimum is independent of its location. 

Finally we notice that the rates are 1 for each ordinary edge, and $h$ for the ghost edge, and this completes the proof.
\end{proof}

Next we turn to the main result of this subsection.

\begin{Lemma} \label{L:independence} 
If $d\geq 1$,
\begin{multline} \label{independenceLemma}
\mathbb{E}^{(d)}\big[\exp(t\cdot C_{k,m,n})\cdot I_{k,m,n}(d)\big] 
\\=\frac1{m-k+d} \cdot \mathbb{E}^{(d-1)}\big[\exp(t\cdot C_{k,m,n})\cdot I_{k-1,m,n-1}(d-1)\big] 
\\ + \mathbb{E}^{(d)}\big[\exp(t\cdot C_{k,m,n})\cdot I_{k-1,m,n-1}(d)\big].
\end{multline}
\end{Lemma}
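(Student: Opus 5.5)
The plan is to use Ghost nesting (Lemma~\ref{L:ghostNesting}) to decompose the event $I_{k,m,n}(d)$ by the number of ghost edges in the smaller matching $M$ on $(A+g,B-b_n)$. By that lemma, $N$ contains every ghost edge of $M$ plus at most one more. So if $N$ has exactly $d$ ghost edges, then either $M$ has $d-1$ ghost edges and the new edge of $N$ is a ghost edge, or $M$ has $d$ ghost edges and the new edge is ordinary. This gives the pointwise identity
\[
I_{k,m,n}(d) = I_{k-1,m,n-1}(d-1)\cdot J_g + I_{k-1,m,n-1}(d)\cdot(1-J_g),
\]
where $J_g$ is the indicator that the new edge of $N$ is a ghost edge. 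Multiplying by $\exp(t\cdot C_{k,m,n})$ and taking expectations splits the left-hand side of \eqref{independenceLemma} into two terms.

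For the first term, condition on $I_{k-1,m,n-1}(d-1)=1$. Lemma~\ref{L:forward} (with $d-1$ in place of $d$) says two things. The new edge is a ghost edge with probability $h/(m-k+d+h)$. And by part 2, $C_{k,m,n}$ is independent of $J_g$ under this conditioning; I would apply it in the strong form provided by the proof, namely conditionally on the $\sigma$-algebra generated by items 1--3, which determines $C_{k,m,n}$. Hence
\[
\mathbb{E}\big[e^{tC_{k,m,n}} I_{k-1,m,n-1}(d-1) J_g\big] = \frac{h}{m-k+d+h}\,\mathbb{E}\big[e^{tC_{k,m,n}} I_{k-1,m,n-1}(d-1)\big].
\]
Dividing by $h^d$ and letting $h\to 0$ gives $\frac{1}{m-k+d}\,\mathbb{E}^{(d-1)}[\cdots]$, which is the first term on the right of \eqref{independenceLemma}.

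For the second term, condition on $I_{k-1,m,n-1}(d)=1$. By the same reasoning $1-J_g$ has conditional probability $(m-k+d+1)/(m-k+d+1+h)$, again independent of $C_{k,m,n}$. This factor tends to $1$, so after dividing by $h^d$ we get $\mathbb{E}^{(d)}[\exp(tC_{k,m,n})I_{k-1,m,n-1}(d)]$. This step requires $m-k+d+1>0$, which holds since $d\ge 1$ and $k\le m+1$ is implicit.

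The main obstacle is making the independence rigorous at the level needed. Lemma~\ref{L:forward} is phrased as conditioning on the number $d$ of ghost edges in $M$. What we need is that, given everything else (items 1--3 of its proof), the location of the minimum is independent of its value, so that the factorization holds for the complex-valued $\exp(tC_{k,m,n})$ and not just for probabilities. I would argue this by noting that $C_{k,m,n}$ is a measurable function of the conditioned data (with the minima in item 3 rather than their locations). The location of the new edge's minimum is then independent of that data with the stated probabilities, using that the edges in item 3 are fresh exponentials. A secondary point is justifying that the limits $h\to0$ exist and commute with the factor $h/(m-k+d+h)$. I would handle this for $\operatorname{Re}(t)\le 0$, where everything is bounded, and extend by analyticity, as the paper permits.
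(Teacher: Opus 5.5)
Your proposal is correct and follows the paper's proof essentially step for step. You split $I_{k,m,n}(d)$ via ghost nesting according to whether $M$ has $d-1$ or $d$ ghost edges, use Lemma~\ref{L:forward} to factor out $h/(m-k+d+h)$ and $(m-k+d+1)/(m-k+d+1+h)$ by independence of the cost of $N$ from the type of the new edge, and then divide by $h^d$ and let $h\to 0$. Your insistence on conditioning on the full $\sigma$-algebra of items 1--3, so that the factorization holds for the complex-valued $\exp(t\cdot C_{k,m,n})$ and not just in distribution, is a welcome tightening of what the paper states more informally.
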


Here the left-hand side is the ghost generating function $F^{(d)}_{k,m,n}(t)$, and in the right-hand side we have two terms with the sort of mismatch of indices treated in Lemma~\ref{L:integration}. Again notice that the factors $I_{k-1,m,n-1}(d-1)$ and $I_{k-1,m,n-1}(d)$ refer to the induced subgraph where specifically the vertex $b_n$ of $B$ has been deleted.

\begin{proof} [Proof of Lemma~\ref{L:independence}]
We first treat $h$ as a fixed positive number and only later take the limit $h\to 0$. For convenience we assume that the various minimum cost matchings are unique, since this holds with probability 1. 

Let $M$ be the minimum $(k-1)$-matching on $(A+g, B - b_n)$ and let $N$ be the minimum $k$-matching on $(A+g, B)$. 
Notice first that if $N$ contains exactly $d$ ghost edges, then by nesting, $M$ has to contain either $d-1$ or $d$ ghost edges. 
We can write this as 
\begin{multline} \label{divisionOfCases}
\mathbb{E}\big[\exp(t\cdot C_{k,m,n})\cdot I_{k,m,n}(d)\big] 
\\ = \mathbb{E}\big[\exp(t\cdot C_{k,m,n})\cdot I_{k,m,n}(d)\cdot I_{k-1,m,n-1}(d-1)\big] 
\\ + \mathbb{E}\big[\exp(t\cdot C_{k,m,n})\cdot I_{k,m,n}(d)\cdot I_{k-1,m,n-1}(d)\big].
\end{multline}

Consider first the case that $M$ contains $d-1$ ghost edges. Then by Lemma~\ref{L:forward} with $d$ replaced by $d-1$, the probability that the ``new'' vertex in $N$ is the ghost vertex is 
\[ \frac{h}{m-k+d+h}.\]
This is because there are $m-k+d$ ordinary vertices left in the competition, and we are looking for the probability that the ghost vertex wins.

If on the other hand $M$ contains $d$ ghost edges, there will be $m-k+d+1$ ordinary vertices in the competition, and the probability that one of them wins (and not the ghost vertex) is 
\[ \frac{m-k+d+1}{m-k+d+1+h}.\]

By Lemma~\ref{L:forward}, in both cases the cost of $N$ is unchanged in distribution by conditioning on the location of the ``new'' edge, and therefore
\begin{multline} \label{hd}
\mathbb{E}\big[\exp(t\cdot C_{k,m,n})\cdot I_{k,m,n}(d)\big] 
\\=\frac{h}{m-k+d+h}\,\mathbb{E}\big[\exp(t\cdot C_{k,m,n})\cdot I_{k-1,m,n-1}(d-1)\big] 
\\ + \frac{m-k+d+1}{m-k+d+1+h}\,\mathbb{E}\big[\exp(t\cdot C_{k,m,n})\cdot I_{k-1,m,n-1}(d)\big].
\end{multline}
Here all terms are of order $h^d$ for small $h$. We obtain \eqref{independenceLemma} by dividing by $h^d$ and taking the limit $h\to 0$.
\end{proof}
 
\subsection{Concluding the proof of the recursion} \label{S:proofRecursion}
To establish the recursion \eqref{matchingRecursion} for $d\geq 1$, we combine Lemmas~\ref{L:integration} and \ref{L:independence}. We rewrite both terms of the right-hand side of equation \eqref{independenceLemma} of Lemma~\ref{L:independence} using equation \eqref{integration} of Lemma~\ref{L:integration}. For the first term we have to replace $d$ by $d-1$ in the lemma. This gives

\begin{multline} 
\mathbb{E}^{(d)}\big[\exp(t\cdot C_{k,m,n})\cdot I_{k,m,n}(d)\big] 
\\=\frac1{m-k+d} \, \Big( \mathbb{E}^{(d-1)}\big[\exp(t\cdot C_{k-1,m,n-1}) \cdot I_{k-1,m,n-1}(d-1)\big]
\\+ \frac{d}{n} \cdot t \cdot \mathbb{E}^{(d)}\big[\exp(t\cdot C_{k,m,n})\cdot I_{k,m,n}(d)\big]\Big)
\\ + \mathbb{E}^{(d)}\big[\exp(t\cdot C_{k-1,m,n-1}) \cdot I_{k-1,m,n-1}(d)\big]
\\+ \frac{d+1}{n} \cdot t \cdot \mathbb{E}^{(d+1)}\big[\exp(t\cdot C_{k,m,n})\cdot I_{k,m,n}(d+1)\big].
\end{multline}
This equation has a term $\mathbb{E}^{(d)}\big[\exp(t\cdot C_{k,m,n})\cdot I_{k,m,n}(d)\big]$ also in the right-hand side, and collecting these terms to the left-hand side yields
\begin{multline} 
\left(1-\frac{d\cdot t}{(m-k+d)\cdot n} \right) \cdot \mathbb{E}^{(d)}\big[\exp(t\cdot C_{k,m,n})\cdot I_{k,m,n}(d)\big] 
\\=\frac1{m-k+d} \cdot \mathbb{E}^{(d-1)}\big[\exp(t\cdot C_{k-1,m,n-1}) \cdot I_{k-1,m,n-1}(d-1)\big]
\\ + \mathbb{E}^{(d)}\big[\exp(t\cdot C_{k-1,m,n-1}) \cdot I_{k-1,m,n-1}(d)\big]
\\+ \frac{d+1}{n} \cdot t \cdot \mathbb{E}^{(d+1)}\big[\exp(t\cdot C_{k,m,n})\cdot I_{k,m,n}(d+1)\big].
\end{multline}

In each term of this equation, the cost $C$ and the indicator variable $I$ have the same indices, which means they are all ghost generating functions as defined by \eqref{ghostGenerating}. Dividing by the first factor of the left-hand side and interpreting the terms as ghost generating functions gives equation \eqref{matchingRecursion}. Technically we have assumed in this argument that $d\geq 1$, and we restate the recursion as a theorem divided into the two cases $d=0$ and $d\geq 1$.

\begin{Thm} \label{T:matchingRecursion}
For bipartite matching, the ghost generating functions can be computed from the following equations:

\begin{equation} \label{recursion0}
F_{k,m,n}^{(0)}
= \frac{t}{n} \cdot F_{k,m,n}^{(1)}
+F_{k-1,m,n-1}^{(0)},
\end{equation}
and for $d\geq 1$,
\begin{multline} \label{recursionThm}
F_{k,m,n}^{(d)}
= \left(1 - \frac{d\cdot t}{(m-k+d)\cdot n}\right)^{-1}
\\ \cdot \left( \frac1{m-k+d}\cdot F_{k-1,m,n-1}^{(d-1)}
+ \frac{d+1}{n} \cdot t \cdot F_{k,m,n}^{(d+1)}
+F_{k-1,m,n-1}^{(d)} \right)
\end{multline}
\end{Thm}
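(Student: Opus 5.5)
The theorem only packages what has already been shown, so I will assemble it from Lemma~\ref{L:integration} and Lemma~\ref{L:independence}. Throughout I work in the half-plane $\operatorname{Re}(t)\le 0$, where every expectation converges. There all the limits $\mathbb{E}^{(d)}$ exist, and the identities can be manipulated termwise as identities of analytic functions. Once established there, they extend to rational continuations by uniqueness of analytic continuation.

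\textbf{Case $d=0$.} I apply Lemma~\ref{L:integration} with $d=0$. As $h\to 0$, the indicators $I_{k-1,m,n-1}(0)$ and $I_{k,m,n}(0)$ both equal $1$ except on an event of probability $O(h)$. Since $|\exp(tC)|\le 1$ for $\operatorname{Re}(t)\le 0$, those factors can be dropped from the $\mathbb{E}^{(0)}$ terms at no cost in the limit. The left-hand side then becomes $F^{(0)}_{k,m,n}$, and the first term on the right becomes $F^{(0)}_{k-1,m,n-1}$. The last term is already $\frac{t}{n}F^{(1)}_{k,m,n}$. This gives \eqref{recursion0}.

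\textbf{Case $d\ge 1$.} I start from \eqref{independenceLemma} in Lemma~\ref{L:independence}. Its right-hand side has two terms with mismatched indices, and I rewrite each with Lemma~\ref{L:integration}.
\begin{itemize}
\item For the first term I use Lemma~\ref{L:integration} with $d$ replaced by $d-1$. This gives $F^{(d-1)}_{k-1,m,n-1}+\frac{d}{n}tF^{(d)}_{k,m,n}$.
\item For the second term I use Lemma~\ref{L:integration} as stated. This gives $F^{(d)}_{k-1,m,n-1}+\frac{d+1}{n}tF^{(d+1)}_{k,m,n}$.
\end{itemize}
The unknown $F^{(d)}_{k,m,n}$ now appears on both sides. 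Moving $\frac{dt}{(m-k+d)n}F^{(d)}_{k,m,n}$ to the left and dividing by $1-\frac{dt}{(m-k+d)n}$ yields \eqref{recursionThm}.

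The division is legitimate because $m-k+d\ge d\ge 1$, so the denominator is finite. Moreover the factor $1-\frac{dt}{(m-k+d)n}$ is nonzero for $\operatorname{Re}(t)\le 0$.

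The boundary case $d=k$ also needs checking. Here $F^{(k)}_{k-1,m,n-1}$ and $F^{(k+1)}_{k,m,n}$ vanish, because a $(k-1)$-matching cannot contain $k$ ghost edges and a $k$-matching cannot contain $k+1$. This is consistent with the convention stated in Section~\ref{S:recursion}.

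\textbf{Main obstacle.} The delicate point is not the algebra but the validity of passing to the limit $h\to 0$ in each lemma. That requires all terms to be exactly of order $h^d$ (or $h^{d\pm1}$), with the $\mathbb{E}^{(\cdot)}$ limits existing. I would justify this by induction on $k$ and $d$: the $\mathbb{E}^{(d)}$ limits on the right-hand sides exist by the inductive hypothesis, and the lemmas' proofs show the left-hand sides converge once the right-hand sides do. Dominated convergence, using $|\exp(tC)|\le 1$, handles the integrals, in particular the step \eqref{thirdTerm}.
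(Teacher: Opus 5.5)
Your argument is correct and is exactly the paper's: the case $d=0$ is Lemma~\ref{L:integration} with the indicators dropped to first order, and for $d\ge 1$ you substitute Lemma~\ref{L:integration} (at $d-1$ and at $d$) into the two terms of Lemma~\ref{L:independence} and solve for $F^{(d)}_{k,m,n}$. One minor caveat concerns your limit-existence sketch, which goes beyond what the paper attempts. As phrased, the induction is circular, because $F^{(d)}_{k,m,n}$ itself enters through Lemma~\ref{L:integration} at $d-1$ and so is not covered by the inductive hypothesis. The fix is to do the same rearrangement at fixed $h>0$, using the exact identity \eqref{hd} and the exact integration-by-parts form of Lemma~\ref{L:integration} against the density $he^{-hx}$, and only then let $h\to 0$; this works because the coefficient $1-\frac{dt}{(m-k+d+h)n}$ stays away from zero for $\operatorname{Re}(t)\le 0$.
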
 

\section{Observations and conjectures} \label{S:observations}
Having established the recursion for the ghost generating functions, we explore some of its consequences.
\subsection{The structure of $F_{k,m,n}(t)$}
From Theorem~\ref{T:matchingRecursion} it follows by induction that $F^{(d)}_{k,m,n}(t)$ is always a rational function in $t$, but we can be more precise.

\begin{Prop} \label{P:polynomialDegrees}
The moment generating function $F_{k,m,n}(t)$ can be written
 \[ F_{k,m,n}(t) = \frac{P(t)}{Q(t)},\]
where $P(t)$ and $Q(t)$ are polynomials of degree $\binom{k}{2}$ and $\binom{k+1}{2}$ respectively, and
\begin{equation} \label{defaultDenominator} 
Q(t) = \prod_{\substack{i,j\geq 0\\ i+j<k}} \left(1-\frac{(k-i-j)\cdot t}{(m-i)(n-j)}\right)
\end{equation}
\end{Prop}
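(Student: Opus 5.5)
We prove a stronger statement by induction on $k$ through the triangular array of Theorem~\ref{T:matchingRecursion}. For $0\le d\le k$ we claim that
\[ F^{(d)}_{k,m,n}(t) = \frac{P^{(d)}_{k,m,n}(t)}{Q^{(d)}_{k,m,n}(t)}, \]
where $Q^{(d)}_{k,m,n}$ is a product of factors of the form $\bigl(1 - \frac{s\,t}{(m-i)(n-j)}\bigr)$ and $P^{(d)}_{k,m,n}$ is a polynomial. The pole set is expected to be the one in \eqref{defaultDenominator}, but with the factors having $i$ small enough removed, depending on $d$. The data $F_{3,3,3}$ suggest this: $F^{(3)}_{3,3,3}=(3-t)^{-3}$, $F^{(2)}_{3,3,3}$ has denominator $(4-t)(3-t)^4$, and so on. We also track the exact degrees. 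The guiding principle is that $F^{(d)}$ vanishes at $t=-\infty$ like $(-t)^{-(k-d)-\ldots}$. The asymptotic argument in Section~\ref{S:example} shows that $F^{(0)}_{k,m,n}(t)\sim c\,(-t)^{-k}$ as $t\to-\infty$, where $c>0$ is the number of $k$-matchings. Hence $\deg Q-\deg P=k$, and it suffices to show that $Q$ divides the denominator exactly with $\deg Q=\binom{k+1}{2}$.

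\textbf{Step 1: identify each new factor.} Unfold \eqref{recursionThm} for fixed $(k,m,n)$ from $d=k$ down to $d=1$. Each step introduces the factor $1-\frac{d\,t}{(m-k+d)n}$. Writing $i=k-d$ and $j=0$, this is exactly the factor $\bigl(1-\frac{(k-i-j)t}{(m-i)(n-j)}\bigr)$ of \eqref{defaultDenominator}. The other inputs, $F^{(\cdot)}_{k-1,m,n-1}$, carry by induction the factors with $n$ replaced by $n-1$ and $k$ by $k-1$. Under the substitution $j\mapsto j+1$ these are precisely the factors of \eqref{defaultDenominator} with $j\ge1$, since $k-1-i-(j-1)=k-i-j$ and $(n-1)-(j-1)=n-j$. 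So the factors with $j=0$ come from the current level and the factors with $j\ge1$ come from level $k-1$. Their product is $Q$, and the count is $k+\binom{k}{2}=\binom{k+1}{2}$.

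\textbf{Step 2: set up the inductive hypothesis.} The hypothesis has to say which $j=0$ factors appear in $F^{(d)}_{k,m,n}$. Since $F^{(d)}$ is built from $F^{(d+1)},\dots,F^{(k)}$ at the same level, it should contain exactly the current-level factors for $d'\ge d$, together with all the $j\ge1$ factors inherited from level $k-1$. We must check that $F^{(d)}_{k-1,m,n-1}$ and $F^{(d-1)}_{k-1,m,n-1}$ have denominators dividing the full level-$(k-1)$ product. This is immediate by induction, because those factors coincide with the $j\ge1$ part of $Q$. Combining terms over the common denominator then gives a rational function whose denominator divides the claimed product. For $d=0$, relation \eqref{recursion0} introduces no new factor, so $F^{(0)}_{k,m,n}$ has denominator dividing $Q$.

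\textbf{Step 3: degrees.} This is the main obstacle. One could get $\deg P$ by propagating an inductive bound, namely $F^{(d)}_{k,m,n}=O(|t|^{-(k-d)-?})$, through the recursion, but the factors of $t$ in front of $F^{(d+1)}$ make this bookkeeping delicate. We instead use the probabilistic asymptotics. As $t\to-\infty$ along the reals, $F^{(0)}_{k,m,n}(t)\sim c\,(-t)^{-k}$ with $c$ equal to the number of $k$-matchings. This holds by the disjointness argument from Section~\ref{S:example}: the density of $C_{k,m,n}$ near $0$ behaves like $c\,x^{k-1}/(k-1)!$. With the denominator known to divide $Q$, this fixes $\deg P=\deg(\text{denominator})-k$. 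To get $\deg P=\binom{k}{2}$ exactly, we write $F=P/Q$ with $Q$ given by \eqref{defaultDenominator} without cancelling. The possibility of cancellation is harmless for the statement "can be written", since we multiply $P$ by any cancelled factors. The degree then follows from the decay rate: $\deg P=\binom{k+1}{2}-k=\binom{k}{2}$. Finally, $Q(0)=1$ and $F(0)=1$ give the normalization $P(0)=1$.
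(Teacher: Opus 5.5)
Your proof is correct and follows the paper's route. The factors of $Q$ are picked up through the recursion: your split into the $j=0$ factors from the current level and the $j\ge 1$ factors inherited from level $k-1$ just makes the paper's one-line remark precise. The relation $\deg Q-\deg P=k$ then comes from the decay of $F_{k,m,n}(t)$ as $t\to-\infty$.

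The paper gets that decay from the sandwich $(1-t)^{-k}\le F_{k,m,n}(t)\le (1-t/(mn))^{-k}$, which is a bit cleaner than deriving the exact constant counting $k$-matchings. Note also that Step 2 only needs divisibility. The claim that $F^{(d)}_{k,m,n}$ contains ``exactly'' those factors is false: $F^{(3)}_{3,3,3}=(3-t)^{-3}$ has no factor $4-t$.
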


\begin{proof}
The factors of $Q(t)$ are picked up one by one in the recursion. Notice that if we compute $F_{k,m,n}$ as suggested by \eqref{recursionThm}, the value of $n$ is changing throughout, while $m$ stays the same.

The fact that $\deg(Q) - \deg(P) = k$ can be proved by induction too, but also follows from a direct argument about the cost of the minimum matching.
When $t$ is large negative, the behavior of $F_{k,m,n}(t) = \mathbb{E}\left(\exp(t\cdot C_{k,m,n})\right)$ is governed by the cases where the cost of the minimum $k$-matching is unusually small. Here we note that $C_{k,m,n}$ is bounded from above by the cost of any given $k$-matching, and from below by the cost of the $k$:th cheapest of the $mn$ edges, which in turn stochastically dominates a sum of $k$ independent exponential variables of mean $1/(mn)$. Since for negative $t$ the inequalities for $F_{k,m,n}(t)$ go the opposite way to those for $C_{k,m,n}$,
\begin{equation} 
\left(1-t\right)^{-k} \leq F_{k,m,n}(t) \leq \left(1-\frac{t}{mn}\right)^{-k}
\end{equation}
showing that $F_{k,m,n}(t)$ is of order $\left|t\right|^{-k}$ as $t\to -\infty$.
\end{proof}

We may ask whether these polynomial degrees are the smallest possible, or if factors of $Q(t)$ sometimes cancel against equal factors of $P(t)$. In the recursion, $m$ and $n$ do not have to be integers, and if we take $k=m=3$ and $n=10/3$, there is actually a cancellation giving 
\[ F_{3,3,10/3} = \frac{280\,(350-85t+6t^2)}{(7-2t)(14-3t)(10-3t)^3}.\]
We would have expected also a factor $(4-t)$ in the denominator, but written in that way, $t=4$ happens to be a zero of the numerator.
I see no reason that such a cancellation could not occur also when $m$ and $n$ are both integers. 

\subsection{Cumulants, zeros, and scaling limits} \label{S:cumulants}
Using the recursion and the computer algebra system Mathematica, we have computed the moment generating functions $F_{k,m,n}(t) = F_{k,m,n}^{(0)}(t)$ for all $k\leq m \leq n \leq 15$ and less systematically for some higher values. We state some of our observations as conjectures. Recall that the cumulants $\kappa_p$ of the random variable $C_{k,m,n}$ are defined by the power series
\[ \log F_{k,m,n}(t) = \sum_{p=1}^\infty \frac{\kappa_p t^p}{p!}.\] 

\begin{Conjecture} \label{C:positive}
All the cumulants of $C_{k,m,n}$ are positive.
\end{Conjecture}

It also seems that for fixed $k, m, n$, the sequence $\kappa_p/p!$ of coefficients of the cumulant generating function is log-concave. As another addendum to Conjecture~\ref{C:positive}, numerical evidence suggests that in the case $k=m=n$ of perfect matching on $K_{n,n}$, the cumulants satisfy
\[ \frac{(p-1)!}{n^{p-1}} \leq \kappa_p(C_{n,n,n}) \leq \frac{2(p-1)!}{n^{p-1}}. \]

We have computed numerically the zeros of the moment generating function $F_{k,m,n}$ in a large number of instances. Figure~\ref{F:20by20} shows the zeros of the function $F_{20,20,20}(t)$. We propose the following conjecture.
\begin{Conjecture} \label{C:zeroFree}
The moment generating function $F_{k,m,n}(t)$ has no complex zeros in the open disk of radius $mn/k$ centered at the origin.
\end{Conjecture}

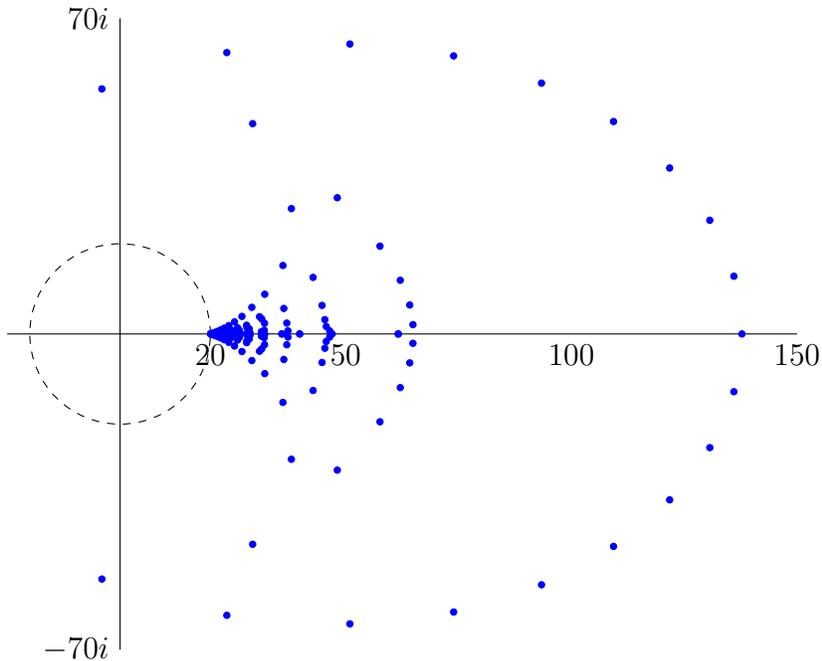
\begin{figure} [h]
\begin{center}
\begin{tikzpicture} [scale=0.06]

\draw (-25,0)--(150,0);
\draw  (0,-70)--(0,70);

\node[below] at (20,0) {20};
\node[below] at (50,0) {50};
\node[below] at (100,0) {100};
\node[below] at (150,0) {150};

\node[left] at (0,70) {$70i$};
\node[left] at (0,-70) {$-70i$};

\draw[black, dashed] (0,0) circle (20);

\foreach \x/\y in 
{20.0408/0, 20.1609/0, 20.1843/0, 20.233/0, 20.5266/0, 21.1493/0, 21.2259/0, 
21.2275/0, 21.9206/0, 21.9449/0, 22.0008/0, 22.4552/0, 22.4559/0, 23.337/0, 23.337/0, 
23.4942/0, 23.5564/0, 23.6576/0, 26.2265/0, 26.4224/0, 26.5414/0, 27.9947/0, 
27.9949/0, 32.0627/0, 35.845/0, 35.848/0, 46.9986/0, 137.797/0, - 4.02611 / -  
  54.343 , -4.02611/ + 54.343, 20.0973/ - 0.0184486 , 
 20.0973/ + 0.0184486 , 20.2163/ - 0.0395084 , 20.2163/ + 0.0395084 , 
 20.3188/ - 0.014208 , 20.3188/ + 0.014208 , 20.3602/ - 0.0928047 , 
 20.3602/ + 0.0928047 , 20.4153/ - 0.00158372 , 
 20.4153/ + 0.00158372 , 20.5037/ - 0.0302406 , 20.5037/ + 0.0302406 ,
  20.5748/ - 0.154321 , 20.5748/ + 0.154321 , 20.6646/ - 0.0626236 , 
 20.6646/ + 0.0626236 , 20.6743/ - 0.026586 , 20.6743/ + 0.026586 , 
 20.8376/ - 0.0162449 , 20.8376/ + 0.0162449 , 20.8624/ - 0.0334553 , 
 20.8624 /+ 0.0334553 , 20.8655 /- 0.264154 , 20.8655/ + 0.264154 , 
 21.0085/ - 0.000419944 , 21.0085/ + 0.000419944 , 
 21.1047 / -  0.103634 , 21.1047 / + 0.103634 , 21.2318 / -  0.404405 , 
 21.2318 / + 0.404405 , 21.4293 / -  0.193746 , 21.4293 / + 0.193746 , 
 21.4785 / -  0.0841032 , 21.4785 / + 0.0841032 , 21.5199 / -  0.0111744, 
 21.5199 / + 0.0111744 , 21.7153 / -  0.607446 , 21.7153 / + 0.607446 , 
 21.8424 / -  0.209768 , 21.8424 / + 0.209768 , 21.8569 / -  0.155137 , 
 21.8569 / + 0.155137 , 22.3197 / -  0.894315 , 22.3197 / + 0.894315 , 
 22.3227 / -  0.16867 , 22.3227 / + 0.16867 , 22.3274 / -  0.223144 , 
 22.3274 / + 0.223144 , 22.8016 / -  0.34551 , 22.8016 / + 0.34551 , 
 22.8072 / -  0.116963 , 22.8072 / + 0.116963 , 22.9228 / -  0.190631 , 
 22.9228 / + 0.190631 , 23.092 / -  1.28826 , 23.092 / + 1.28826 , 
 23.5477 / -  0.600217 , 23.5477 / + 0.600217 , 23.6545 / -  62.3952 , 
 23.6545 / + 62.3952 , 23.6655 / -  0.313812 , 23.6655 / + 0.313812 , 
 24.0839 / -  1.8539 , 24.0839 / + 1.8539 , 24.5502 / -  0.895619 , 
 24.5502 / + 0.895619 , 24.5916 / -  0.233048 , 24.5916 / + 0.233048 , 
 24.6095 / -  0.305098 , 24.6095 / + 0.305098 , 24.6204 / -  0.630496 , 
 24.6204 / + 0.630496 , 24.8248 / -  0.203669 , 24.8248 / + 0.203669 , 
 25.362 / -  2.67902 , 25.362 / + 2.67902 , 26.0265 / -  1.35965 , 
 26.0265 / + 1.35965 , 26.162 / -  1.05732 , 26.162 / + 1.05732 , 
 26.2887 / -  0.145191 , 26.2887 / + 0.145191 , 26.4838 / -  0.679728 , 
 26.4838 / + 0.679728 , 27.0072 / -  3.89188 , 27.0072 / + 3.89188 , 
 28.0424 / -  1.9371 , 28.0424 / + 1.9371 , 28.1206 / -  1.41898 , 
 28.1206 / + 1.41898 , 28.6381 / -  1.10637 , 28.6381 / + 1.10637 , 
 28.774 / -  0.310674 , 28.774 / + 0.310674 , 29.1994 / -  5.90088 , 
 29.1994 / + 5.90088 , 29.385 / -  46.6372 , 29.385 / + 46.6372 , 
 30.9035 / -  3.82358 , 30.9035 / + 3.82358 , 31.3027 / -  0.440049 , 
 31.3027 / + 0.440049 , 31.3651 / -  0.473673 , 31.3651 / + 0.473673 , 
 31.3936 / -  3.32316 , 31.3936 / + 3.32316 , 31.9446 / -  0.821631 , 
 31.9446 / + 0.821631 , 32.017 / -  2.36837 , 32.017 / + 2.36837 , 
 32.0599 / -  8.79992 , 32.0599 / + 8.79992 , 36.0794 / -  15.1823 , 
 36.0794 / + 15.1823 , 36.2827 / -  5.65789 , 36.2827 / + 5.65789 , 
 36.9443 / -  2.40266 , 36.9443 / + 2.40266 , 37.1972 / -  0.686119 , 
 37.1972 / + 0.686119 , 37.9655 / -  27.7954 , 37.9655 / + 27.7954 , 
 39.7989 / -  0.00113051 , 39.7989 / + 0.00113051 , 42.7723 / -  12.5372 , 
 42.7723 / + 12.5372 , 44.755 / -  6.32544 , 44.755 / + 6.32544 , 
 45.3818 / -  3.17397 , 45.3818 / + 3.17397 , 45.6913 / -  1.66257 , 
 45.6913 / + 1.66257 , 46.452 / -  0.675812 , 46.452 / + 0.675812 , 
 48.1381 / -  30.1999 , 48.1381 / + 30.1999 , 50.9289 / -  64.2933 , 
 50.9289 / + 64.2933 , 57.5918 / -  19.4675 , 57.5918 / + 19.4675 , 
 61.6339 / -  0.0356533 , 61.6339 / + 0.0356533 , 62.0905 / -  11.9 , 
 62.0905 / + 11.9 , 64.1937 / -  6.41734 , 64.1937 / + 6.41734 , 
 64.8986 / -  2.07012 , 64.8986 / + 2.07012 , 73.913 / -  61.66 , 
 73.913 / + 61.66 , 93.3853 / -  55.6186 , 93.3853 / + 55.6186 , 
 109.345 / -  47.1087 , 109.345 / + 47.1087 , 121.78 / -  36.7939 , 
 121.78 / + 36.7939 , 130.674 / -  25.2058 , 130.674 / + 25.2058 , 
 136.015 / -  12.8022 , 136.015 / + 12.8022 }
\filldraw[blue] (\x,\y) circle (0.7);

\end{tikzpicture}

\caption{The $190 = \binom{20}{2}$ complex zeros of the moment generating function $F_{20,20,20}$ of the minimum cost of a 20 by 20 perfect matching. 
There are 28 real zeros of which the smallest is at approximately $20.04$ and the largest at $137.80$, and 81 complex conjugate pairs. 
The first pole of $F_{20,20,20}$ is at $t=20$ and has order 39. There are several zeros close to this pole but none in the disk of radius 20 around the origin. Even so, there is a pair of ``overhang'' zeros whose real part is negative, and in particular well below 20.
}
\label{F:20by20}
\end{center}
\end{figure}

The point $t=mn/k$ is the location of the first pole of $F_{k,m,n}(t)$.
A statement equivalent to Conjecture~\ref{C:zeroFree} is therefore that the Taylor series for $F_{k,m,n}(t)$ and $\log F_{k,m,n}(t)$ have the same radius of convergence.

A main conjecture motivating this work is that after rescaling, $C_{n,n,n}$ is asymptotically Gaussian. This was discussed in Section~\ref{S:prelGaussian}, and we restate the conjecture \eqref{rescaled}.
\begin{Conjecture} \label{C:clt}
As $n\to\infty$,
\begin{equation} \label{eq:clt}
 \sqrt{n}\cdot \left(C_{n,n,n} - \pi^2/6\right) \overset{\mathcal{D}}\longrightarrow \mathcal{N}(0,4\zeta(2)-4\zeta(3)).
 \end{equation}
\end{Conjecture}
As was mentioned in Section~\ref{S:prelGaussian}, the constant $4\zeta(2)-4\zeta(3)\approx 1.7715$ comes from the known asymptotics of the variance of $C_{n,n,n}$ \cite{W10}. 

Conjectures~\ref{C:positive}, \ref{C:zeroFree} and \ref{C:clt} may not seem directly related to one another, but next we show that Conjecture~\ref{C:positive} implies Conjecture~\ref{C:zeroFree}, which in turn implies Conjecture~\ref{C:clt}.

\begin{Prop}
Let $X$ be a real valued random variable whose moment generating function $F$ is a rational function. If all the cumulants of $X$ are nonnegative, then $F$ has no zero of modulus smaller than the pole closest to the origin.
\end{Prop}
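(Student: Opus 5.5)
Write $F(t)=P(t)/Q(t)$ in lowest terms, with $F(0)=1$, and factor
\[ F(t)=\frac{\prod_i (1-t/\rho_i)}{\prod_j (1-t/\sigma_j)}, \]
where the $\rho_i$ are the zeros and the $\sigma_j$ the poles, listed with multiplicity and all nonzero since $F(0)=1$. We may assume $F$ has at least one pole: a polynomial moment generating function must be constant, and then the statement is vacuous. Take logarithms exactly as in \eqref{log}. Near the origin,
\[ \log F(t)=\sum_{p\ge1}\frac{t^p}{p}\Big(\sum_j \sigma_j^{-p}-\sum_i\rho_i^{-p}\Big), \]
so $\kappa_p/p! = \frac1p\big(\sum_j\sigma_j^{-p}-\sum_i\rho_i^{-p}\big)$. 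The radius of convergence $R$ of this series is the distance from the origin to the nearest singularity of $\log F$. Those singularities are exactly the zeros and poles that survive cancellation, so $R=\min(\min_i|\rho_i|,\min_j|\sigma_j|)$. The goal is therefore to show that $R$ equals the modulus $r$ of the nearest pole.

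The key tool is Pringsheim's theorem: a power series with nonnegative coefficients and radius of convergence $R<\infty$ has a singularity at the positive real point $t=R$. By hypothesis the coefficients $\kappa_p/p!$ are nonnegative, so $\log F$ is singular at $t=R$. That means $R$ is either a zero or a pole of $F$.

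First I would rule out $R$ being a zero. $F(t)=\mathbb{E}[e^{tX}]$ is given by its Laplace integral on the real interval $[0,R)$. There $F=\exp(\log F)$ is an exponential of a series with nonnegative coefficients, so $F\ge1$ on $[0,R)$ and is nondecreasing. It cannot tend to $0$ as $t\to R^-$, so $R$ is not a zero; by Pringsheim it is a pole, and in particular $R\ge r$.

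The reverse inequality $R\le r$ holds trivially, because every surviving pole is a singularity of $\log F$. Hence $R=r$, and every zero has modulus at least $R=r$, which is the claim.

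The main obstacle is bookkeeping rather than depth. Zeros and poles must be taken after cancellation, so that each one really is a singularity of $\log F$; the logarithm's singularities are branch points, but near the origin $\log F$ is analytic and Pringsheim applies to it as a power series. I also need to check that a zero at $t=R$ genuinely contradicts $F\ge1$ on $[0,R)$, by continuity of the rational function $F$ at $R$.

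An alternative route avoiding Pringsheim would use the cumulant formula directly. Suppose some zero had modulus smaller than every pole. Then the terms $-\rho_i^{-p}$ from the smallest-modulus zeros dominate $p\kappa_p/p!$ for large $p$. Their sum oscillates in sign, or is negative when the zero is real positive, which contradicts nonnegativity. Making this rigorous with several equal-modulus zeros requires an equidistribution argument, which is why I would prefer the Pringsheim route.
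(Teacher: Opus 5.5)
Your proof is correct, but it takes a genuinely different route from the paper's.

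The paper argues directly from the identity $\kappa_p/(p-1)! = \sum_{\xi}\xi^{-p}-\sum_{\rho}\rho^{-p}$. If some zero is strictly closer to the origin than every pole, the zeros of minimal modulus dominate for large $p$. Dirichlet's simultaneous approximation theorem then gives infinitely many $p$ for which all their $p$-th powers have argument in $[-\pi/4,\pi/4]$, forcing $\kappa_p<0$. This is exactly your ``alternative route''. The case of several zeros of equal minimal modulus needs only Dirichlet's theorem, not an equidistribution argument.

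Your version replaces this phase bookkeeping with the Vivanti--Pringsheim theorem applied to the cumulant series. It adds the observation that $F=\exp(\log F)\ge 1$ on $[0,R)$, which excludes a zero at $t=R$. What it buys:
\begin{itemize}
\item It is shorter.
\item It also shows that the nearest pole lies on the positive real axis at $t=r$.
\end{itemize}
The paper's route, in exchange, keeps the explicit zero--pole formula for the cumulants in view, and the next proposition reuses that formula to bound $|\kappa_p|$.

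Two of your steps are superfluous:
\begin{itemize}
\item You never use that $F$ equals its Laplace integral on $[0,R)$. You only need $F=\exp(\log F)$ on the disk of convergence, which follows from the identity theorem.
\item The reduction to the case where $F$ has a pole is unnecessary. If $F$ had zeros but no poles, Pringsheim would place a zero at $t=R$, and your $F\ge 1$ argument already rules that out.
\end{itemize}
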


\begin{proof}
Since $F$ is a moment generating function, $F(0)=1$. By the fundamental theorem of algebra, $F$ can be factored as
\[ F(t) = \frac{\prod_{F(\rho)=0} (1-t/\rho)}{\prod_{F(\xi)=\infty} (1-t/\xi)}\]
assuming that multiplicities are taken into account.
There must be some neighborhood of the origin where we can take logarithms, so that
\[ \log F(t) = \sum_{F(\rho)=0} \log(1-t/\rho) - \sum_{F(\xi)=\infty} \log(1-t/\xi).\]
From the Taylor expansion $\log(1-x) = -x-x^2/2-x^3/3-\dots$, we find that the coefficient of $t^p$ in the Taylor series of $\log F(t)$ is 
\[ \frac1p \cdot \left(\sum_{F(\xi) = \infty}\frac1{\xi^p} - \sum_{F(\rho)=0} \frac1{\rho^p}\right),\] 
from which it follows that
\begin{equation} \label{zerosAndPoles}
 \frac{\kappa_p}{(p-1)!} = \sum_{F(\xi) = \infty}\frac1{\xi^p} - \sum_{F(\rho)=0} \frac1{\rho^p}.
 \end{equation} 

If there is a zero of $F$ of smaller absolute value than all the poles, then for large $p$ the terms corresponding to the smallest zeros will dominate the right-hand side of \eqref{zerosAndPoles}.
There might conceivably be many (necessarily non-real) zeros of minimum absolute value, but the Dirichlet approximation theorem shows that there must be infinitely many integer values of $p$ for which the $p$:th powers of all such zeros have their phase arguments in, say, the range $-2\pi/8$ to $2\pi/8$. For large $p$ this will make the cumulant $\kappa_p$ negative.
\end{proof}

\begin{Prop}
Suppose that the moment generating functions $F_{n,n,n}(t)$ never have zeros of modulus smaller than $n$, the first pole. Then as $n\to\infty$, $C_{n,n,n}$ has a Gaussian scaling limit as given by equation \eqref{eq:clt}.
\end{Prop}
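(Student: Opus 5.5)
The plan is to verify the cumulant criterion \eqref{suffice}: $\kappa_p(C_{n,n,n}) = o(n^{-p/2})$ for every fixed $p\geq 3$. By Janson's theorem \cite{Janson88} it would even suffice to do this for all large $p$, but the argument below should give all $p\geq 3$ directly. Together with the known convergence of the first two cumulants of $\tilde C_n$, this yields \eqref{eq:clt}.

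\textbf{Step 1: express the cumulants through zeros and poles.} Using the zero--pole expansion \eqref{zerosAndPoles} from the previous proof, applied to $F=F_{n,n,n}$, we have
\[ \frac{\kappa_p(C_{n,n,n})}{(p-1)!} = \sum_{\xi}\xi^{-p} - \sum_{\rho}\rho^{-p}. \]
By Proposition~\ref{P:polynomialDegrees} with $k=m=n$:
\begin{itemize}
\item The poles are the points $\xi=(n-i)(n-j)/(n-i-j)$ for $i+j<n$, and all of them have modulus at least $n$.
\item There are $\binom{n+1}{2}$ poles and at most $\binom{n}{2}$ zeros.
\end{itemize}
By hypothesis every zero satisfies $|\rho|\geq n$. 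If cancellations occur, the number of zeros only drops, and every surviving pole is still one of the listed points.

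\textbf{Step 2: bound the cumulant by counting.} The triangle inequality gives
\[ |\kappa_p(C_{n,n,n})| \leq (p-1)!\left(\binom{n+1}{2}+\binom{n}{2}\right) n^{-p} = (p-1)!\, n^{2-p}. \]
We need $n^{2-p}=o(n^{-p/2})$, which is equivalent to $2-p<-p/2$, that is, $p>4$. So the crude bound already gives \eqref{suffice} for every $p\geq 5$, and Janson's theorem then supplies $p=3,4$ automatically. This is the route I would take; no finer information about where the zeros lie is needed.

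\textbf{Step 3: assemble the conclusion.}
\begin{itemize}
\item The first two cumulants are handled by \eqref{parisi} and \eqref{varLimit}: $\kappa_1(\tilde C_n)\to 0$ and $\kappa_2(\tilde C_n)\to 4\zeta(2)-4\zeta(3)$.
\item All higher cumulants tend to zero by Step 2 together with Janson's theorem.
\item The cumulant criterion cited from \cite{Tao} then gives convergence in distribution to the Gaussian in \eqref{eq:clt}.
\end{itemize}

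\textbf{Main obstacle.} The delicate point is whether the quoted results apply in exactly this form. I would check two things:
\begin{itemize}
\item that Janson's theorem requires only cumulant convergence for all large $p$, plus control of the first two cumulants, and no additional uniformity;
\item that the expansion \eqref{zerosAndPoles} is valid, i.e.\ that the logarithm's Taylor series converges near $0$. This holds because no zero or pole lies at the origin.
\end{itemize}
If the Janson step were unavailable, one could try to improve Step 2 for $p=3,4$ by pairing zeros with nearby poles. I expect this to be difficult, so I would rely on Janson.
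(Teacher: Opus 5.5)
Your proof is correct and follows essentially the same argument as the paper. You bound each of the $n^2=\binom{n+1}{2}+\binom{n}{2}$ terms in \eqref{zerosAndPoles} by $n^{-p}$, which gives $|\kappa_p(C_{n,n,n})|\le (p-1)!\,n^{2-p}$ and settles $p\ge 5$; Janson's theorem then covers $p=3,4$, and \eqref{parisi} and \eqref{varLimit} handle the first two cumulants. One small inconsistency: your opening claim that the argument would give all $p\ge 3$ directly is contradicted by your own Step~2, which, like the paper, needs Janson for $p=3,4$.
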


\begin{proof}
We know from Proposition~\ref{P:polynomialDegrees} that if we apply equation \eqref{zerosAndPoles} to the moment generating function $F_{n,n,n}$ of $C_{n,n,n}$, the right-hand side will have $n^2$ terms in total. The pole of $F$ which is closest to the origin is $\xi_1 = n$. Assuming that no zero of $F$ is closer to the origin than this, the cumulants are bounded by
\[ \left|\kappa_p(C_n)\right| \leq (p-1)!  \cdot \frac{n^2}{n^p} = \frac{(p-1)! }{n^{p-2}}.\]
As was explained in Section~\ref{S:prelGaussian}, this would imply that the cumulant of the rescaled cost $\tilde{C}_n = \sqrt{n}\cdot (C_n - \pi^2/6)$ satisfies 
\[
\kappa_p(\tilde{C}_n) = O\left(\frac1{n^{p/2-2}}\right).
\] 
This tends to zero for $p\geq 5$. According to a theorem of Svante Janson (the main theorem of \cite{Janson88}), this would imply that $\kappa_3(\tilde{C}_n)$ and $\kappa_4(\tilde{C}_n)$ too tend to zero, and that $\tilde{C}_n$ has a Gaussian limit.
\end{proof}

\subsection{A zero-free disk for $k\leq 3$} \label{S:zf3}
Conjecture~\ref{C:zeroFree} on a zero-free disk of $F_{k,m,n}(t)$ is trivial for $k=1$ since the moment generating function $F_{1,m,n}(t)$ has no zero. It clearly holds also for $k=2$. There is only one zero of $F_{2,m,n}$ and it must therefore be real and to the right of the abscissa of convergence. A simple calculation reveals that the first pole is at $t=mn/2$ and the zero is at $t=mn$. 
The first interesting case of Conjecture~\ref{C:zeroFree} is therefore when $k=3$, and in this section we give a proof that it holds for $k=3$ and every $m$ and $n$. 

In \cite{AS02}, Sven Erick Alm and Gregory Sorkin describe a function, suggested to them by Svante Janson, that has a number of properties in common with $F_{k,m,n}$. They discussed the Laplace transform of $C_{k,m,n}$ which is equivalent to the moment generating function (and to the characteristic function) under a simple change of variable.

For given $k$, $m$ and $n$, Janson's function is a rational function that has the same denominator as $F_{k,m,n}$, and numerator equal to the denominator of $F_{k-1,m,n}$. Specifically, let
\[ J_{k,m,n}(t) = \frac{\prod_{\substack{i,j \geq 0\\ i+j< k-1}} \left(1-\frac{(k-1-i-j)\cdot t}{(m-i)(n-j)}\right)}
{\prod_{\substack{i,j \geq 0\\ i+j< k}} \left(1-\frac{(k-i-j)\cdot t}{(m-i)(n-j)}\right)}
 \]
 As Janson, Alm and Sorkin noted (and is easily verified using Theorem~\ref{T:matchingRecursion}),
 \[ J_{1,m,n} = \frac{1}{1-\frac{t}{mn}} = F_{1,m,n} \]
 and
 \[ J_{2,m,n} = \frac{1-\frac{t}{mn}}{\left(1-\frac{2t}{mn}\right)\left(1-\frac{t}{(m-1)n}\right)\left(1-\frac{t}{m(n-1)}\right)} = F_{2,m,n} \]
The initial idea seems to have been that perhaps $F_{k,m,n} = J_{k,m,n}$ for all $k,m,n$. This would have implied a Gaussian scaling limit since all the zeros of $J_{k,m,n}$ are real. But as was noted already in \cite{AS02}, the identity fails for $k=3$. As we have seen, $F_{3,3,3}$ has a pair of conjugate non-real zeros.

Even so, $F_{3, m, n}$ and $J_{3,m,n}$ are close enough that a comparison reveals that $F_{3,m,n}$ cannot have zeros closer to the origin than its first pole.
\begin{Prop} \label{P:zeroFree3}
The moment generating function $F_{3,m,n}(t)$ has no zero in the disk \[\left|t\right| \leq \frac{mn}{3}.\]
\end{Prop}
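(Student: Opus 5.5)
The plan is to compute $F_{3,m,n}$ explicitly from Theorem~\ref{T:matchingRecursion} and compare it with Janson's function $J_{3,m,n}$. Unrolling the recursion for $k=3$ requires only finitely many ghost generating functions: $F^{(d)}_{3-i,m,n-i}$ for $0\le i\le 3$ and $0\le d\le 3-i$. Every one of them is rational with denominator dividing $Q(t)$ from \eqref{defaultDenominator}. By Proposition~\ref{P:polynomialDegrees} we can write $F_{3,m,n}=P(t)/Q(t)$ with $\deg P=3$, where $Q$ has the six factors indexed by $(i,j)$ with $i+j<3$. Janson's function is $J_{3,m,n}=R(t)/Q(t)$, where $R$ is the denominator of $F_{2,m,n}$; it has three linear factors, with real zeros at $mn/2$, $(m-1)n$ and $m(n-1)$.

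The first step is to compute $P-R$. Both are cubics with constant term $1$. Their linear coefficients agree, because both functions have the mean given by \eqref{CoppersmithSorkin}; for $J$ this is the telescoping fact, which I would check directly. Their leading coefficients are fixed by the $t\to-\infty$ asymptotics $F\sim mn(m-1)(n-1)(m-2)(n-2)/(3!\,(-t)^3)$, since the number of $3$-matchings is $\binom m3\binom n3 3!$. So I expect $P-R=t^2(\alpha+\beta t)$ with explicit rational $\alpha,\beta$ in $m,n$. I would get these by running the recursion symbolically, or equivalently by matching the second moment using the variance formula of \cite{W10}. The $3,3,3$ example is a check: $P$ is $(162-90t+16t^2-t^3)/162$, and $R$ comes from $F_{2,3,3}$.

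The second step is to show $P(t)\neq 0$ for $|t|\le mn/3$, and I would use Rouché's theorem. Write $P=R+(P-R)$. On the circle $|t|=mn/3$, compare $|R(t)|$ with $|t|^2|\alpha+\beta t|$. The zeros of $R$ are real and lie at or beyond $mn/2$, since $(m-1)n\ge mn/2$ and $m(n-1)\ge mn/2$ when $m,n\ge 2$, and these are strictly larger than $mn/3$. So $R$ has no zeros in the closed disk, and the factor $|1-t/\xi|$ is minimized at $t=mn/3$, where it equals $1-mn/(3\xi)$. This gives an explicit lower bound for $|R|$ on the circle, which I would compare against the explicit upper bound for $|P-R|$. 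If $|P-R|<|R|$ everywhere on the circle, then $P$ has no zeros inside it, and strictness excludes zeros on the boundary. If Rouché turns out too lossy near the positive real axis, I would treat the real segment separately: $P$ is positive on $[0,mn/3]$ because $F$ is a moment generating function and is finite there. Only non-real zeros of modulus $\le mn/3$ would then remain to exclude. For those, $P$ is a real cubic with one real root and a conjugate pair, so the product of the roots ($-1/\text{leading coefficient}$) and Vieta relations could bound the modulus of the pair from below.

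The main obstacle is the uniform inequality in $m,n$ (with $m,n\ge 3$): the sign and size of $\alpha,\beta$ relative to the product $R$. I expect the tight case to be near $t=mn/3$ with $m=n=3$, where the real zero $\rho_1\approx3.509$ sits close to $3$. So the estimate will probably need a careful polynomial inequality in $m,n$, possibly after substituting $t=(mn/3)e^{i\theta}$ and reducing to a real trigonometric polynomial inequality in $\cos\theta$ whose coefficients are checked to be positive for all $m,n\ge 3$.
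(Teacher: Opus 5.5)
Your plan is essentially the paper's proof: write $F_{3,m,n}$ over the common denominator $Q$, compare its numerator with that of Janson's function $J_{3,m,n}$, and apply Rouché on $|t|=mn/3$, using that each real-rooted factor of the Janson numerator is smallest in modulus at $t=mn/3$. The paper's direct computation gives $\beta=0$ and, in your constant-term-one normalization, $\alpha=-1/\bigl(m^2n^2(m-1)(n-1)\bigr)$; in the paper's unnormalized form the two numerators differ by exactly $t^2$. Hence on the circle $|P-R|=1/(9(m-1)(n-1))$, while $|R|\ge (2m-3)(2n-3)/(27(m-1)(n-1))$, which is at least three times larger for $m,n\ge 3$, so your fallback arguments are unnecessary.
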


\begin{proof}
A direct computation shows that $F_{3, m, n}(t)$ can be written as
\[ F_{3,m,n}(t) = \frac{P(t)}{Q(t)},\] where the denominator is 
\begin{multline} 
Q(t) = (mn-3t)\,((m-1)n - 2t) \, (m(n-1)-2t) \\ 
((m-2)n-t) \, ((m-1)(n-1) - t) \, (m(n-2) - t),
\end{multline}
and the numerator is 
\begin{multline} P(t) = m^2n^2(m-1)(n-1)
-mn(4mn-3m-3n+2)t \\
+ (5mn-2m-2n-1)t^2 + 2t^3.
\end{multline}
Written with the same denominator $Q(t)$, the numerator of the Janson function $J_{3, m, n}(t)$ is
\begin{multline}
P_J(t) = (mn-2t)\cdot ((m-1)n-t)\cdot (m(n-1)-t) \\
= m^2n^2(m-1)(n-1)-mn(4mn-3m-3n+2)t \\ + (5mn-2m-2n)t^2 + 2t^3.
\end{multline} 
The two polynomials $P(t)$ and $P_J(t)$ are strikingly similar and differ by exactly $t^2$. Therefore on a circle $\left|t\right| = R$ the difference between them is
\[ \left| P(t) - P_J(t)\right| = R^2.\]
Moreover, provided that $R<mn/2$, it follows from the factorization of $P_J(t)$ that on the circle of radius $R$,
\[\left|P_J(t)\right| \geq P_J(R),\] 
since each of the three factors is minimized in absolute value when $t=R$.

If we take $R=mn/3$ (which is the radius relevant to Conjecture~\ref{C:zeroFree}), we find that $P_J(R)>R^2$, since
\begin{multline}  
P_J\left(\frac{mn}3\right)
= \left(mn-\frac{2mn}3\right)\cdot \left((m-1)n-\frac{mn}3\right) \cdot \left(m(n-1)-\frac{mn}3\right) \\
= \frac{m^2n^2}3\cdot\left(\frac{2m}{3}-1\right) \cdot \left(\frac{2n}3-1\right) \geq \frac{m^2n^2}3 = 3R^2.
\end{multline} 
Notice for the last inequality that in order for the optimization problem to make sense, both $m$ and $n$ have to be at least 3.

It follows from these estimates that on the circle $\left|t\right| = mn/3$, 
\[ \left| P(t) - P_J(t)\right| <  \left| P_J(t)\right|.\] 
Since the three zeros of $P_J(t)$ are $t=mn/2$, $t=(m-1)n$ and $t=m(n-1)$, all of them outside the disk $\left|t\right| \leq mn/3$, it follows from basic complex analysis that the zeros of $P(t)$ are also outside this disk. As $t$ goes around the boundary of the disk, both $P(t)$ and $P_J(t)$ will wind zero times around the origin.  
\end{proof}

This establishes Conjecture~\ref{C:zeroFree} for $k\leq 3$ and all $m$ and $n$, but in view of the distribution of zeros of $F_{n,n,n}(t)$ as shown in Figure~\ref{F:20by20}, there seems to be no hope of generalizing this argument to higher values of $k$. The zeros of $F_{k,m,n}$ are in general not close to the real line and we should not expect $F_{k,m,n}$ to be well approximated by a function with only real zeros.
 
\subsection{Asymptotics for fixed $k$ and large $m$ and $n$}

It turns out that for fixed $k$, if at least one of $m$ and $n$ is large, the zeros of $F_{k,m,n}(t)$ will cluster near the real line, well to the right of the main pole. We can exploit this to prove that for each $k$ there are only finitely many counterexamples to Conjecture~\ref{C:zeroFree}.
\begin{Prop} \label{P:finitely}
For each $k$ there are only finitely many pairs of integers $m, n\geq k$ for which $F_{k,m,n}(t)$ has zeros in the disk $\left| t \right| \leq mn/k$.
\end{Prop}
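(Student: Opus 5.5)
We propose to prove Proposition~\ref{P:finitely} by rescaling and a Rouché-type comparison, exactly in the spirit of Proposition~\ref{P:zeroFree3}. Fix $k$ and substitute $t = mn\,s/k$, so that the disk $\left|t\right|\le mn/k$ becomes the unit disk $\left|s\right|\le 1$. By Proposition~\ref{P:polynomialDegrees}, the denominator $Q$ is a product of the factors $1-\frac{(k-i-j)\,mn\,s}{k(m-i)(n-j)}$. As $\min(m,n)\to\infty$, each factor tends to $1-\frac{k-i-j}{k}s$. If only one of $m,n$ is large, say $n\to\infty$ with $m$ fixed, the factor still converges, now to $1-\frac{(k-i-j)m}{k(m-i)}s$. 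The main task is to show that the rescaled numerator $P(mn\,s/k)$, normalised so that $P(0)=1$, also converges to a limit polynomial $P_\infty(s)$, and that this limit has no zeros in the closed unit disk. Rouché's theorem applied on $\left|s\right|=1$ (or on a slightly larger circle) will then exclude zeros for all but finitely many pairs.

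To get the limit, we would run the recursion of Theorem~\ref{T:matchingRecursion} in the rescaled variable. With $t=mn\,s/k$, the coefficients become the following.
\begin{itemize}
\item The prefactor is $\left(1-\frac{d\,m\,s}{k(m-k+d)}\right)^{-1}$.
\item The term $\frac{d+1}{n}t$ equals $\frac{(d+1)m s}{k}$, which is large; we compensate by rescaling the ghost generating functions, setting $G^{(d)} = m^{d}\cdot F^{(d)}$ or a similar normalisation.
\end{itemize}
The natural normalisation is suggested by the constant terms $F^{(1)}_{k,m,n}(0)\approx k/m$. We also need to track the shift $n\mapsto n-i$ along the recursion. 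Since $(n-i)/n\to1$, the recursion converges coefficientwise to a limiting recursion with parameters depending only on $k$ and on $m$ (or only on $k$ when $m\to\infty$ as well). By induction on the triangular array, every normalised $G^{(d)}$, and hence $F_{k,m,n}(mn\,s/k)$, converges locally uniformly in $s$ away from poles to an explicit rational function.

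The case split is as follows.
\begin{itemize}
\item Both $m,n\to\infty$: the limit should be the moment generating function of a sum of fixed exponentials, essentially $\prod_{\ell}(1-\ell s/k)^{-c_\ell}$-type expressions with a numerator whose zeros are at $\left|s\right|>1$. This matches the observation that zeros cluster on the real axis to the right of the main pole.
\item $m$ fixed, $n\to\infty$: this gives finitely many one-parameter families, one for each fixed $m\ge k$.
\item $n$ fixed, $m\to\infty$: by the symmetry of the problem in $m$ and $n$, this is the mirror image of the previous case.
\end{itemize}
Finitely many pairs then remain, because outside a bounded region of $(m,n)$ we are close to one of these limits uniformly.

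To make Rouché applicable we must check the following. First, the zeros of $P_\infty$ lie strictly outside $\left|s\right|\le1$. Second, the convergence $P\to P_\infty$ holds uniformly on a neighbourhood of the closed disk. Third, the degree drop of $P$ in the limit does not create zeros coming in from infinity. The third point is harmless, because a zero escaping to infinity does not enter the disk.

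The main obstacle is the first point: identifying $P_\infty$ and proving it is zero-free in the closed unit disk, including the boundary point $s=1$, which is exactly where the first pole sits. The worry is a zero coinciding with that pole, as in the cancellation seen for $F_{3,3,10/3}$. For the limit we would try to show that the limiting distribution is a convolution of independent, possibly mixed, exponential variables, so that its moment generating function has only real zeros that interlace the poles, as for Janson's function $J_{k,m,n}$. If this succeeds, the zeros are real and exceed $1$, and a perturbation bound near $s=1$ finishes the argument. For the families with $m$ fixed we would attempt the same by showing that in the limit only edges at the $m$ vertices of $A$ matter, which again yields a product structure with real zeros.
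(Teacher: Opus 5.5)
\textbf{There is a genuine gap.} Your framework is the paper's. You rescale $t=mns/k$ and pass to the limit in the recursion of Theorem~\ref{T:matchingRecursion}; your normalisation $m^{d}F^{(d)}$ is exactly what \eqref{binomial} suggests. You then use continuity of roots and split into three cases: $n$ fixed with $m\to\infty$, its mirror image, and $m,n$ both large. The gap is that the step carrying all the content is left as an open ``main obstacle'': identifying the limiting numerator $P_\infty$ and showing its roots lie strictly outside $|s|\le 1$.

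Moreover, the mechanism you suggest for that step looks at the wrong object. The limiting law is a pure convolution of exponentials, so the limiting moment generating function $F_\infty$ has no zeros at all, and real-rootedness or interlacing says nothing. The zeros of $F_{k,m,n}$ do not converge to zeros of $F_\infty$. They converge to roots of $P_\infty=F_\infty Q_\infty$, where $Q$ is the explicit denominator of Proposition~\ref{P:polynomialDegrees}. These are zeros of $Q_\infty$ not absorbed by poles of $F_\infty$: in the limit each zero of $F_{k,m,n}$ coalesces with a pole. What must be proved is therefore a precise cancellation statement: which factors of $Q_\infty$ are absorbed, and in particular that every factor vanishing at $s=1$ is among them. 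That is exactly your worry about the boundary point, and your plan offers no way to settle it.

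The paper settles it by induction on $L_{i,j}=F^{(i-j)}_{i,m,n-k+i}$. With $n$ fixed and $m\to\infty$, it shows $L_{i,j}(0)\sim\binom{i}{j}m^{-(i-j)}$ and that the numerators converge to the product \eqref{Plimit} over $j'\ge 1$. The key step is $\binom{i}{j-1}(i-j+1)-i\binom{i}{j}=-(i-j)\binom{i}{j}$. So precisely the $j'=0$ factors cancel. The limiting roots $s=\frac{k(n-k+i')}{n(i'-j')}$, $1\le j'<i'\le k$, then exceed $1$ strictly, since $k(n-k+i')-n(i'-j')=(k-i')(n-k)+nj'>0$. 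When $m$ and $n$ are both large they approach $k/(i'-j')\ge k/(k-1)$.

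Your probabilistic idea can also close the gap, but only combined with $P=F\cdot Q$. Take $n$ fixed and $m\to\infty$, and consider the $n$ independent mean-one variables obtained by multiplying by $m$ the cheapest edge cost at each vertex of $B$. The quantity $mC_{k,m,n}$ is always at least the sum of the $k$ smallest of these. Equality holds once those $n$ cheapest edges end in distinct vertices of $A$, which fails with probability at most $\binom{n}{2}/m$. Hence for $\operatorname{Re}(s)\le 0$,
\[
F_\infty(s)=\prod_{i'=1}^{k}\Bigl(1-\frac{i'ns}{k(n-k+i')}\Bigr)^{-1}.
\]
Because the polynomials $P$ have bounded degree, pointwise convergence of $P=FQ$ on that half-plane gives coefficientwise convergence to $F_\infty Q_\infty$. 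That limit is exactly the $j'\ge 1$ product. As it stands, though, your proposal neither computes $P_\infty$ nor locates its roots, so the conclusion does not yet follow.
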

Notice that we can say ``zeros'' in plural, since zeros to the left of the main pole are non-real and thereby come in conjugate pairs.
We devote the rest of this section to proving Proposition~\ref{P:finitely}.

In order to compare the zeros of $F_{k,m,n}(t)$ to the main pole at $t=mn/k$, we will make the variable substitution 
\[ t = \frac{mn}k\cdot s. \]
This way the main pole is at $s=1$, and ideally we want to prove that $F_{k,m,n}$ has no zeros in the disk given by 
$\left| s \right| < 1$.

Since this rescaling depends on $k$, $m$ and $n$, we want to describe the recursion in Theorem~\ref{T:matchingRecursion} in a way that keeps these numbers fixed. For given $k$, $m$ and $n$ we therefore let 
\begin{equation}
L_{i,j} = F_{i,m,n-k+i}^{(i-j)}(t).
\end{equation}
In terms of ordinary edges and ghost edges in the optimization problem, $i$ is the total number of edges, and $j$ is the number of ``ordinary'' (non ghost) edges. 

This numbering is quite natural since it means that $L_{i,j}$ is computed from the three functions $L_{i-1,j-1}$, $L_{i,j-1}$ and $L_{i-1,j}$. 
The recursion becomes
\begin{multline} \label{recursionL(t)}
 L_{i,j} = \left(1-\frac{(i-j)t}{(m-j)(n-k+i)}\right)^{-1} \\
 \cdot \left( \frac{L_{i-1,j}}{m-j} + \frac{(i-j+1)t}{n-k+i}L_{i,j-1} + L_{i-1,j-1}\right).
 \end{multline}

The base case is $L_{0,0} = 1$. We then compute $L_{i,j}$ for $0\leq j \leq i \leq k$, and the final result is 
\[ L_{k,k} = F_{k,m,n}(t).\]

We can write each $L_{i,j}$ with a denominator consisting of the factors of the form 
\[ 1-\frac{(i-j)t}{(n-k+i)(m-j)} \]
picked up through the recursion. Naturally we can describe this with new indices $i'$ and $j'$ so that 
\[ L_{i,j} = L_{i,j}(0) \cdot \frac{P_{i,j}}{Q_{i,j}},\]
where
\[ Q_{i,j} = \prod_{\substack{0\leq j'\leq j\\j'< i'\leq i}}\left( 1 - \frac{(i'-j')\cdot t}{(n-k+i')(m-j')}\right)\]  
and $P_{i,j}$ is a polynomial in $t$ with constant term 1. 
After the substitution $t = (mn/k)\cdot s$, the equation can be written
\[ Q_{i,j} = \prod_{\substack{0\leq j'\leq j\\j'< i'\leq i}}\left( 1 - \frac{(i'-j')\cdot mns}{k(n-k+i')(m-j')}\right)\]  
Here $Q_{i,j}$ depends implicitly on $k$, $m$, and $n$, but fixing $k$ and $n$, we can let $m$ tend to infinity and cancel the factors $m-j'$ against factors $m$. Therefore the limit 
\begin{equation} \label{Qlimit} 
\lim_{m\to\infty} Q_{i,j}(s) = \prod_{\substack{0\leq j'\leq j\\j'< i'\leq i}}\left( 1 - \frac{(i'-j')ns}{k(n-k+i')}\right)
\end{equation}
holds coefficientwise as polynomials in $s$.

We claim that in the large $m$ limit, the numerators $P_{i,j}$ have a similar behavior with the product taken over $j'>0$, excluding the cases with $j'=0$, so that 
\begin{equation} \label{Plimit} 
\lim_{m\to\infty} P_{i,j}(s) = \prod_{\substack{1\leq j'\leq j\\j'< i'\leq i}}\left( 1 - \frac{(i'-j')ns}{k(n-k+i')}\right)
\end{equation}
and moreover, 
\begin{equation} \label{binomial} 
L_{i,j}(0) \sim \frac{\binom{i}{j}}{m^{i-j}}.
\end{equation}

In order to verify this inductively from the recursion \eqref{recursionL(t)}, we rewrite the recursion in terms of $s$, and again replace factors of $m-j$ by $m$.
\begin{multline} \label{recursionL(s)}
 L_{i,j} \sim \left(1-\frac{(i-j)ns}{k(n-k+i)}\right)^{-1} \\
 \cdot \left( \frac{L_{i-1,j}}{m} + \frac{(i-j+1)mns}{k(n-k+i)}L_{i,j-1} + L_{i-1,j-1}\right).
 \end{multline}

We want to rewrite the right-hand side of \eqref{recursionL(s)} with a denominator of \eqref{Qlimit}, assuming that \eqref{Plimit} and \eqref{binomial} hold whenever $(i, j)$ is replaced by $(i-1,j)$, $(i,j-1)$ or $(i-1,j-1)$.
The factor 
\[ 1-\frac{(i-j)ns}{k(n-k+i)} \]
in the denominator comes directly from the recursion. Rewriting the rest of the right-hand side of \eqref{recursionL(s)} using the induction hypothesis, we can put the three terms on a common denominator and also extract a factor $1/m^{i-j}$.
This gives a remaining numerator of 
\begin{multline}
\prod_{\substack{1\leq j'\leq j \\ j'< i'\leq i \\ (i',j') \neq (i,j)}}  \left( 1 - \frac{(i'-j')ns}{k(n-k+i')} \right) \\
\cdot \Bigg(\binom{i-1}{j} \left(1-\frac{ins}{k(n-k+i)}\right) + \binom{i}{j-1}\frac{(i-j+1)ns}{k(n-k+i)} \\
+ \binom{i-1}{j-1} \left(1-\frac{ins}{k(n-k+i)}\right) \Bigg)
\end{multline}

Here the first product already involves all the factors it should, except the one corresponding to $i'=i$ and $j'=j$. The second factor simplifies to
\[  
\binom{i}{j} \left(1-\frac{ins}{k(n-k+i)}\right) + \binom{i}{j-1}\frac{(i-j+1)ns}{k(n-k+i),}
\]
since 
\[ \binom{i-1}{j} + \binom{i-1}{j-1} = \binom{i}{j},\]
and further to 
\begin{multline} \label{lastFactor}
\binom{i}{j} + \left( \binom{i}{j-1}(i-j+1) - \binom{i}{j}\cdot i \right) \cdot \frac{ns}{k(n-k+i)} \\
= \binom{i}{j} \left(1-\frac{(i-j)ns}{k(n-k+i)}\right),
\end{multline}
since 
\[ 
\binom{i}{j-1}(i-j+1)-\binom{i}{j}\cdot i = \frac{i!(j-i)}{j!(i-j)!} = -\binom{i}{j}(i-j).
\]
The factor \eqref{lastFactor} gives the binomial coefficient and the missing factor of \eqref{Plimit}, thereby completing the induction.

Taking $i=j=k$, we can informally write our result as
\begin{equation} \label{informal}
\lim_{m\to\infty} F_{k,m,n}(s) = \frac{ \prod_{1\leq j'<i'\leq k}\left( 1 - \frac{(i'-j')\cdot ns}{k(n-k+i')}\right) }{ \prod_{0\leq j'<i'\leq k}\left( 1 - \frac{(i'-j')\cdot ns}{k(n-k+i')}\right) }
\end{equation}
but this has to be interpreted in a specific way to make sense. The rescaling to the variable $s$ instead of $t$ is necessary, since otherwise there is no convergence. Moreover, the interpretation is that it is possible to write $F_{k,m,n}$ with a numerator and a denominator in such a way that the numerator is a polynomial in $s$ of degree $k(k-1)/2$, the denominator is a polynomial in $s$ of degree $k(k+1)/2$, and as $m$ tends to infinity with $k$ and $n$ fixed, each coefficient of these polynomials converges to the corresponding coefficient of the right-hand side of \eqref{informal}.

It looks as if we could just cancel all the factors of the numerator against the same factors of the denominator, but this is not the case. The actual numerators and denominators of $F_{k,m,n}(s)$ do not in general have any zeros in common, even though the zeros are close to each other.

At this point we invoke the well-known continuity of the complex zeros of a polynomial as a function of its coefficients. For fixed $k$ and $n$, if $m$ is large enough, the zeros of $F_{k,m,n}(s)$ will be close to the corresponding zeros of the right-hand side of \eqref{informal}, which are given by
\[ s = \frac{k(n-k+i')}{n(i'-j')} > \frac{k(n-k+i')}{n\cdot i'} \geq 1.\]
Therefore there are only finitely many values of $m$ for which any zero can be in the unit disk.

By symmetry we also conclude that for fixed $k$ and $m$, there are only finitely many values of $n$ for which $F_{k,m,n}(s)$ can have zeros in the unit disk.

Finally we consider the case that both $m$ and $n$ are large. Then we can simplify \eqref{informal} even further and conclude that the zeros of $F_{k,m,n}(s)$ will cluster around the points 
\[ s = \frac{k}{i'-j'}\] 
with one zero close to $k/(k-1)$, two zeros close to $k/(k-2)$, three near $k/(k-3)$ and so on, and finally $k-1$ zeros close to $k$. Therefore, for fixed $k$, if $m$ and $n$ are both large enough, $F_{k,m,n}(s)$ cannot have any zeros in the unit disk. This completes the proof of Proposition~\ref{P:finitely}. 

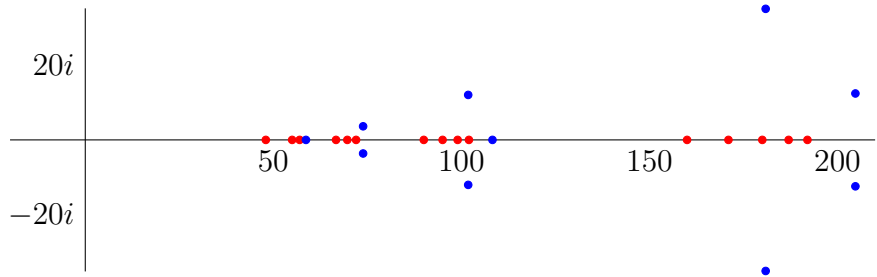
\begin{figure} [h]
\begin{center}
\begin{tikzpicture} [scale=0.05]

\draw (-20,0)--(210,0);
\draw  (0,-35)--(0,35);

\node[below] at (50,0) {50};
\node[below] at (100,0) {100};
\node[below] at (150,0) {150};
\node[below] at (200,0) {200};

\node[left] at (0,20) {$20i$};
\node[left] at (0,-20) {$-20i$};

\foreach \x in 
{48., 55., 57., 66.6667, 69.6667, 72., 90., 95., 99., 102., 160., 171., 180., 187., 192.}
\filldraw[red] (\x,0) circle (1.0);

\foreach \x/\y in 
{58.6699/0, 108.26/0, 73.87/3.61627, 73.87/-3.61627, 101.809/-11.9679, 101.809/11.9679, 
180.894/-34.8732, 180.894/34.8732, 204.754/-12.3515, 204.754/12.3515}
\filldraw[blue] (\x,\y) circle (1.0);

\end{tikzpicture}

\caption{In blue, the $10 = \binom{5}{2}$ complex zeros of $F_{5,12,20}(t)$, the moment generating function of the minimum cost of a 5-matching in a 12 by 20 complete bipartite graph, and in red, its $15 = \binom{6}{2}$ poles.  The five largest poles constitute a cluster surrounded by four zeros, the next cluster of four poles are surrounded by three zeros, and so on.
}
\label{F:5-12-20}
\end{center}
\end{figure}

Figure~\ref{F:5-12-20} shows the zeros and poles of the function $F_{5,12,20}(t)$.
The distinct clusters of poles and zeros are clearly visible. 

Our argument does not offer any explicit bounds on $m$ and $n$ in the conceivable counterexamples to Conjecture~\ref{C:zeroFree}. The proof of Proposition~\ref{P:finitely} relies on the stability of the complex zeros of a polynomial as its coefficients vary, and even though the zeros are known to depend continuously on the coefficients, they can behave rather badly in some cases as is demonstrated by the infamous Wilkinson polynomial \cite{Wilkinson}.

\end{document}